\newcommand{\squareproof}{}
\newtheorem{remark}{Remark}
\begin{document}

\title{A primal dual mixed finite element method for inverse identification of the diffusion coefficient and its relation to the Kohn-Vogelius penalty method}

\author{Erik Burman\thanks{Department of Mathematics, University College London, London, UK--WC1E 6BT, UK, 
 {\tt e.burman@ucl.ac.uk}. }} 

	\maketitle
	%




\bigskip 
	
	\begin{abstract}
		We revisit the celebrated Kohn-Vogelius penalty method and discuss how to use it for the unique continuation problem where data is given in the bulk of the domain. We then show that the primal-dual mixed finite element methods for the elliptic Cauchy problem introduced in \cite{BLO18} (\emph{E. Burman, M. Larson, L. Oksanen, Primal-dual mixed finite element methods for the elliptic
              Cauchy problem, SIAM J. Num. Anal., 56 (6), 2018}) can be interpreted as a Kohn-Vogelius penalty method and modify it to allow for unique continuation using data in the bulk. We prove that the resulting linear system is invertible for all data. Then we show that by introducing a singularly perturbed Robin condition on the discrete level sufficient regularization is obtained so that error estimates can be shown using conditional stability.
              Finally we show how the method can be used for the identification of the diffusivity coefficient in a second order elliptic operator with partial data. Some numerical examples are presented showing the performance of the method for unique continuation and for impedance computed tomography with partial data.
	\end{abstract}
	
	\begin{keywords}
		Unique continuation, Mixed finite element method, Stability.
	\end{keywords}

\section{Introduction}
The reconstruction of the coefficient in the second order operator in Poisson's problem is one of the most well-known inverse problems, known as the Calder\'on problem \cite{Cald80}. It appears in many applications, maybe the most well studied one is Electrical Impedance Tomography (EIT) which is important for instance in oil prospection or medical imaging. In the classical setting one assumes that the Dirichlet-Neumann (D-N) map is known and it has been shown that the diffusion coefficient is then uniquely determined. There are also stability estimates showing that the stability in the general case is no better than logarithmic. The literature on the Calder\'on problem is huge and we refer to the review paper \cite{Uhl09} for an overview of the mathematical theory. For work on computational methods for EIT we refer to \cite{KM90, Know98,BJL14, Rondi16, HKQ18} and for an introduction to finite element methods for the reconstruction of coefficients in elliptic problems we refer to \cite{Harr21}. In the above cited works data is available on the whole boundary. A case that is important in practice, but poorly understood, is that of partial data, or non-standard data. That is, the D-N map is known only on the subset of the boundary, only a small sample of the D-N map is known, or some data is known in the bulk instead of on the boundary. The stability theory for this case is less well developed and existing estimates very pessimistic \cite{Chou23}. In this case the forward problem of the inverse problem is ill-posed and any method based on solving forward problems using standard approaches will fail. Such inverse problems will be intrinsically linked to the unique continuation problem of the forward operator, since the partial data must allow for a continuation of the solution to the boundaries where data is not available. The unique continuation problem is an ill-posed linear problem. For example, assume that for given $\Omega \subset \mathbb{R}^d$,  $\gamma \in W^{1,\infty}(\Omega)$, with $\inf_{x \in \Omega} \gamma \ge \gamma_0 >0$, $u$ solves the equation
\begin{equation}\label{eq:toy}
\nabla \cdot (\gamma \nabla u) = 0 \mbox{ in } \Omega.
\end{equation}
It is then known that if $u$ and $\gamma \nabla u \cdot n$ are known on some subset of $\partial \Omega$ then $u$ is uniquely determined in all of $\Omega$ Similarly, if $u\vert_{\omega} = q$ is known for some subset $\omega \subset \Omega$ then $u$ is uniquely determined in $\Omega$. On the other hand, if the data $\gamma$ and the data in the bulk $q$ do not match a solution $u$ of \eqref{eq:toy} the solution may not exist. This is a complication for the determination of $\gamma$, since on the continuous level the unique continuation problem is known to admit a solution only for the exact $\gamma$, which is to be determined. The objective of the present study is to design and analyse a method that allows for non-standard data and can be used for the solution of the Calder\'on problem with partial data. Recently a new class of methods based of stabilized finite elements was introduced for such unique continuation problems \cite{Bur13,Bur14,Bur16,Bur17,BHL18, BNO19, BNO20}. The upshot of these methods is that error estimates can be proven that reflect the approximation order of the finite element method and the effect of stability and perturbations in a natural way, without resorting to a perturbed, regularized version of the continuous problem.
Here we handle the ill-posedness of the forward problem by using the finite element discretization introduced in \cite{BLO18} for the numerical approximation of the elliptic Cauchy problem. We will show that this method can be interpreted as a discrete realisation of the Kohn-Vogelius penalty method and discuss some variations. Firstly we will modify the method so that data can be introduced in the interior of the domain and show that the discrete system is invertible for all $\gamma$. Secondly we propose a  regularization on the boundary flux, through perturbation of the boundary condition, that alleviates the need for Tikhonov regularization in the interior of the domain in the method of \cite{BLO18}. We then extend the error analysis of \cite{BLO18} to allow for this new regularization, distributed data and non-constant and perturbed $\gamma$, setting the scene for its application to the approximation of the Calder\'on problem with partial data. These estimates reflect the approximation order of the finite element spaces, the impact of perturbations in data, as well as the stability of the inverse problem. We illustrate the theory in some numerical experiments on academic test cases, first for the unique continuation problem with distributed data and then on a Calder\'on problem with partial data.
\section{Notation and assumptions}
Below we let $\Omega \in \mathbb{R}^d$ denote some polygonal/polyhedral domain with boundary  $\partial \Omega$ and outward pointing normal $n$. 

We assume that $\gamma \in W^{1,\infty}(\Omega)$ has a lower bound larger than zero and that the solution of \eqref{eq:toy} has at least the additional regularity $u 
\in H^{2}(\Omega)$. The $L^2$-scalar product over some set $X$ will be defined by
\[
(v,w)_X := \int_X v w ~\mbox{d}X
\]
and the associated norm $\|w\|_X := (w,w)_X^{\frac12}$.
With some abuse of notation we will use the same notation for both scalar and vector quantities.
    \section{The Kohn-Vogelius penalty method}
   The Kohn-Vogelius method was introduced in the context of the problem of impedance computed tomography \cite{wexler1985impedance,KV87} where one wishes to recover the coefficient $\gamma(x) \in L^{\infty}(\Omega)$ of a second order elliptic problem given the Dirichlet to Neumann map
   \[
   \Lambda_\gamma:H^{\frac12}(\partial \Omega)\mapsto H^{-\frac12}(\partial \Omega)
   \]
   which takes $\varphi \in H^{\frac12}(\partial \Omega)$ to $\psi = (\gamma \nabla u_\varphi\cdot n)\vert_{\partial \Omega} \in H^{-\frac12}(\partial \Omega)$  where $u_\varphi$ solves $\nabla \cdot (\gamma \nabla u_\varphi) = 0$ with $u_\varphi\vert_{\partial \Omega} = \varphi$. In this situation, for a fixed $\gamma$ we can formulate two well posed problems, one with Dirichlet data and one with Neumann data, the solutions of which must coincide for the solution of the tomography problem. Given an approximation $\tilde \gamma$ of $\gamma$ We may write the two problems
   \begin{align}\label{eq:KV_D}
   \nabla \cdot (\tilde \gamma \nabla u_\varphi) & = 0 \mbox{ in }\Omega\\
   u_\varphi  & = \varphi \mbox{ on } \partial \Omega \nonumber
   \end{align}
   and
   \begin{align}\label{eq:KV_N}
   \nabla \cdot (\tilde \gamma \nabla u_\psi) & = 0 \mbox{ in }\Omega\\
   \tilde \gamma \nabla u_\psi \cdot n & = \psi \mbox{ on } \partial \Omega. \nonumber
   \end{align}
The Kohn-Vogelius method now consists of solving the optimization problem (see \cite{wexler1985impedance, KV87})
\[
\gamma = \mbox{argmin}_{\tilde \gamma} \|\tilde \gamma^{\frac12}\nabla (u_{\varphi} -  u_{\psi})\|^2_\Omega
\]
subject to \eqref{eq:KV_D} and \eqref{eq:KV_N}.
In this situation the Dirichlet and Neumann data are known everywhere on $\partial \Omega$ and we obtain a method where the optimization can be performed by solving a series of well-posed problems. In practice a common situation is that data is available only on some part of the boundary or in some subset of the domain. In both these cases the reconstruction relies on unique continuation for the differential operator and the Kohn-Vogelius method needs to be modified. For the sake of discussion let us consider the case where measurements of the solution are available in some subset $\omega \subset \Omega$. For a given $\gamma>0$ we know that $u\vert_\omega = q$ and
\[
\nabla \cdot (\gamma \nabla u) = 0 \mbox{ in }\Omega.
\]
Provided $q$ is indeed the data from a solution to the problem it is known that its continuation $u$ is unique. Nevertheless the problem is severely ill-posed in the sense of Hadamard. It seems that to handle the tomography problem with partial data some understanding of how the Kohn-Vogelius method can be applied to the unique continuation problem is necessary.
\subsection{The Kohn-Vogelius method for the unique contination problem}
Since no boundary data is available in this case it is natural to introduce a trace variable $\tilde g \in H^{\frac12}(\Omega)$ and consider the problems
   \begin{align}\label{eq:KV_UC1}
   -\nabla \cdot ( \gamma \nabla u) & = 0 \mbox{ in }\Omega\\
   u  & = \tilde g \mbox{ on } \partial \Omega \nonumber
   \end{align}
   and
   \begin{align}\label{eq:KV_UC2}
   -\nabla \cdot ( \gamma \nabla u_\omega) + \alpha \chi_{\omega} u_\omega & = \alpha \chi_{\omega} q\mbox{ in }\Omega\\
    u & = \tilde g \mbox{ on } \partial \Omega \nonumber
   \end{align}
   where $\chi_\omega$ denotes the characteristic function of the set $\omega$ and $\alpha \in \mathbb{R}^+$. We solve the optimization problem
   \begin{equation}\label{eq:gminim}
   g = \mbox{argmin}_{\tilde g \in H^{\frac12}(\partial \Omega)} \|\gamma^{\frac12}\nabla (u -  u_{\omega})\|^2_\Omega + \alpha \|u_\omega - q\|_\omega^2
   \end{equation}
   subject to \eqref{eq:KV_UC1}-\eqref{eq:KV_UC2}.
   One may then ask if this minimization problem admits a unique solution? Computing the Euler-Lagrange equations we see that the critical point must satisfy
\[
(\gamma \nabla (u_\omega - u),\nabla v)_\Omega  = 0 \mbox{ for all } v \in H^1_0(\Omega)
\]
   and
   \[
   (\gamma \nabla (u - u_\omega),\nabla v_\omega )_\Omega + \alpha ( u_\omega,v_\omega)_\omega = \alpha (q,v_\omega)_\omega \mbox{ for all } v_\omega \in H^1_0(\Omega)
   \]
     Using the constraint equations this reduces to
 \[
(\gamma \nabla  u_\omega,\nabla v)_\Omega  = 0 \mbox{ for all } v \in H^1_0(\Omega) \mbox{ and } ( u_\omega,v_\omega)_\omega = (q,v_\omega)_\omega \mbox{ for all } v_\omega \in H^1_0(\Omega)
\]    
and we see that in addition to the constraint equations \eqref{eq:KV_UC1} and \eqref{eq:KV_UC2},
$\nabla \cdot ( \gamma \nabla u_\omega) = 0$ must hold at the critical point. By using this additional condition we see that $u_\omega \vert_\omega = q$. It follows by unique continuation that the minimiser is unique. Nevertheless the minimisation problem is not stable under perturbations of the data $\gamma$ and $q$. Indeed the above argument fails if data are perturbed. It follows that the continuous model needs to be regularized before it can be useful. 
Finite element methods using Lavrientiev regularization based on this type of formulation was analysed in \cite{BGJ22} for the elliptic Cauchy problem. It was shown in \cite{belgacem2022uniqueness} that in the absence of regularization a naive choice of finite element spaces results in non-uniqueness of discrete solution.

Even after regularization a nested optimization seems inevitable since both $g$ and $\gamma$ have to be determined. Therefore we will follow a different approach. 
     \section{The mixed primal-dual finite element method}
     Instead of discretizing the continuous problem \eqref{eq:KV_UC1}-\eqref{eq:gminim} we will formulate the Kohn-Vogelius method diretcly in the discrete variables. To avoid the nested optimmization problem in $g$ and $\gamma$ implied by the discussion above we will drop the trace variable and instead focus on the Euler-Lagrange equations associated to the constrained system and show that, also in the absence of regularization and for all data, they admit a unique solution. To strengthen the norm of the discrete stability we then perturb the boundary condition in one of the sub-problems, leading to stability in the $H^1$-norm for the primal variable. Drawing on earlier results on the elliptic Cauchy problem and dual primal mixed methods we then show error bounds in $h$ and the size of data perturbation when perturbed data $\tilde q$, $\tilde \gamma$ and $\tilde f$ are used. To further distinguish this work from the result of \cite{BLO18} we here focus on the unique continuation problem with data in the bulk, as opposed to Dirichlet-Neumann data on the boundary. Nevertheless the discussion applies verbatim to the elliptic Cauchy problem as well.
     
We let $\{\mathcal{T}_h\}_h$ denote a family of matching quasi-uniform tesselations of $\Omega$ into shape regular simplices $T$, indexed by the mesh parameter $h = \max_{T \in \mathcal{T}} \mbox{diam}(T)$. We let $\mathbb{P}_p(T)$ denote the space of polynomials of degree less than or equal to $p$ on $T$ and define the space
\[
X_p := \{v \in L^2(\Omega): v\vert_T \in \mathbb{P}_p(T),\quad \forall T \in \mathcal{T}_h \}.
\]
We define $V_k := X_k \cap H^1(\Omega)$ and
 let $RT_p$ denote the Raviart-Thomas space of order $p$ on $\mathcal{T}_h$ \cite{RT77}, 
\[
RT_p:= \{q_h \in H_{div}(\Omega)\, : \, q_h\vert_T \in
\mathbb{P}_p(T)^d+ \vec x \, \mathbb{P}_p(T) \mbox{ for all } T \in \mathcal{T} \}.
\] 
 To write our method we will approximate the system \eqref{eq:KV_UC2} using a mixed formulation in $RT_p \times X_p$ and \eqref{eq:KV_UC1} using a primal formulation in $V_k$. To eliminate the variable $g$ the corresponding boundary condition will be dropped and instead the two systems are coupled on the boundary. Indeed \eqref{eq:KV_UC1} will take the trace of solution of \eqref{eq:KV_UC2} as Dirichlet data and \eqref{eq:KV_UC2} will take the normal flux of \eqref{eq:KV_UC1} as Neumann data. We consider a unique continuation problem with right hand side, $f \in L^2(\Omega)$.
For a given $\gamma>0$, $q\in H^1(\omega)$ we know that 
\begin{equation}\label{eq:UCf}
-\nabla \cdot (\gamma \nabla u) = f \mbox{ and } u\vert_\omega = q
\end{equation}
 
 We start by writing the Kohn-Vogelius functional (c.f. \cite[Equation (1.7b)]{KV87}) with constraint directly in the discrete setting. Consider for $\sigma_h \in RT_p$, $u_h \in V_k$ and $z_h \in X_p$,
 \begin{equation}
 \mathcal{L}_\gamma(\sigma_h,u_h,z_h) := \frac12 \|\gamma^{-\frac12} (\gamma \nabla u_h - \sigma_h)\|_\Omega^2 + \frac12 \alpha \|u_h - q\|_\omega^2 - (\nabla \cdot \sigma_h,z_h)_\Omega - (f,z_h)_\Omega.\label{eq:lagrange}
 \end{equation}

Our approximation will be given by the critical point of \eqref{eq:lagrange}. To see that this is indeed reasonable we compute the Euler-Lagrange equations. For all $v_h \in V_k$,
\begin{equation}\label{eq:EL1}
(\gamma \nabla u_h - \sigma_h,\nabla v_h)_\Omega + \alpha (u_h,v_h)_\omega = \alpha (q,v_h)_\omega.
\end{equation}
and for all $\tau_h \in RT_p$
\begin{equation}\label{eq:EL2}
( \nabla u_h - \gamma^{-1} \sigma_h,-\tau_h)_\Omega - (\nabla \cdot \tau_h,z_h)_\Omega = 0
\end{equation}
and finally for all $y_h \in X_p$,
\begin{equation}\label{eq:EL3}
-(\nabla \cdot \sigma_h, y_h)_\Omega = (f,y_h)_\Omega.
\end{equation}
So far this is valid for all choices of polynomial orders $p$ and $k$, to illustrate the relation between the present method and the classical Kohn-Vogelius method outlined in the previous section we will now consider the special case $k=p$. Then take the second term in the left hand side of equation \eqref{eq:EL1} and integrate by parts to see that, 
\[
-(\sigma_h,\nabla v_h)_\Omega = (\nabla \cdot \sigma_h,v_h)_\Omega - (\sigma_h \cdot n,v_h)_{\partial \Omega} = -(f,v_h)_\Omega - (\sigma_h \cdot n,v_h)_{\partial \Omega}.
\]
In the last equality we used that $V_p \subset X_p$ and the constraint equation \eqref{eq:EL3}.
It follows that we can write \eqref{eq:EL1},   find $u_h$ such that
\begin{equation}\label{eq:EL1b}
(\gamma \nabla u_h ,\nabla v_h)_\Omega + \alpha (u_h,v_h)_\omega = (\sigma_h \cdot n,v_h)_{\partial \Omega} + (f,v_h)_\Omega+\alpha (q,v_h)_\omega. \quad \forall v_h \in V_p
\end{equation}
Now observe that \eqref{eq:EL1b} coincides with the weak form of \eqref{eq:UCf} and the weakly imposed boundary condition $\gamma (\nabla u_h \cdot n)\vert_{\partial \Omega} = (\sigma_h \cdot n)\vert_{\partial \Omega}$. Here the data is integrated through the reaction term.

Similarly by integrating by parts in the first term of $\eqref{eq:EL2}$
\[
( \nabla u_h ,-\tau_h)_\Omega = (u_h,\nabla \cdot \tau_h)_\Omega - (u_h, \tau_h \cdot n)_{\partial \Omega}.
\]
Introducing the auxiliary variable $p_h = u_h - z_h$ we see that we may write \eqref{eq:EL2}-\eqref{eq:EL3} as find $\sigma_h,p_h \in RT_p \times X_p$ such that
\begin{align}\label{eq:EL2b}
(\gamma^{-\frac12} \sigma_h,\tau_h)_\Omega + (\nabla \cdot \tau_h,p_h)_\Omega & = (u_h, \tau_h \cdot n)_{\partial \Omega}\\
-(\nabla \cdot \sigma_h, y_h)_\Omega &= (f,y_h)_\Omega.\label{eq:EL3b}
\end{align}
This on the other hand is a mixed approximation of \eqref{eq:UCf} with the weakly imposed Dirichlet condition $p_h\vert_{\partial \Omega} = u_h\vert_{\partial \Omega}$ and without the data term. It follows that the Euler-Lagrange equations are consistent with the unique continuation problem \eqref{eq:UCf}. 

\subsection{Existence of unique discrete solution}
The equations \eqref{eq:EL1}-\eqref{eq:EL3} correspond to a square linear system and to prove that the linear system is invertible it is therefore enough to prove uniqueness of solutions. Assuming that two sets of solutions $U_1 = (u_1,\sigma_1,z_1)$ and $U_2 = (u_2,\sigma_2,z_2)$ solve \eqref{eq:EL1}-\eqref{eq:EL3} then if $U = U_1- U_2$, $U= (u_h,\sigma_h,z_h)$, is a solution to \eqref{eq:EL1}-\eqref{eq:EL3} with $q=f=0$.
We choose $v_h = u_h$ in \eqref{eq:EL1}, $\tau_h = \sigma_h$ in \eqref{eq:EL2} and $y_h = z_h$ in \eqref{eq:EL3} to obtain the bound
\[
\|\gamma^{-\frac12} (\gamma \nabla u_h - \sigma_h)\|_\Omega^2 +  \alpha \|u_h \|_\omega^2  = 0.
\]
It follows that $\gamma \nabla u_h = \sigma_h$ and $u_h\vert_{\omega} = 0$. This implies that equation \eqref{eq:EL2} reduces to
\[
(\nabla \cdot \tau_h,z_h)_\Omega = 0, \, \forall \tau \in RT_p. 
\]
By the inf-sup stability of the pair $RT_p\times X_p$  there exists $\tau_z \in RT_p$ such that $\nabla \cdot \tau_z  = z_h$ and hence $z_h = 0$.  As a consequence of $\gamma \nabla u_h = \sigma_h$ we have $u_h \in C^1(\Omega)$ and since $u_h$ is piecewise polynomial $u_h \in H^2(\Omega)$. It follows by \eqref{eq:EL3} that $\nabla \cdot (\gamma \nabla u_h) = 0$. However since $u_h\vert_\omega = 0$ we conclude by unique continuation that $u_h = 0$ and also $\sigma_h = 0$. We conclude that the discrete system \eqref{eq:EL1}-\eqref{eq:EL2} admits a unique solution for every $\gamma$ and for every right hand side $f$, $q$. This is somewhat surprising since the underlying continuous problem has a meaning only for compatible data. The explication is of course that we have no uniform a priori stability estimate on $u_h$ or $\sigma_h$ and due to the ill-posedness uniformity must be impossible to achieve for general data (otherwise one would  be able to pass to the limit in the equations to prove existence even in cases where the solution does not exist). Nevertheless, it is a useful result for the design of computational methods 
for the reconstruction of $\gamma$. Indeed in an optimization algorithm the gradient can always be computed, independent of any regularization.

In order to achieve error bounds for smooth solutions we need some tunable control of $u_h$. This was achieved in \cite{BLO18} by adding a Tikhonov regularization term on $u_h$ to the Lagrangian \eqref{eq:lagrange}, where the parameter was then chosen as a function of $h$ to achieve optimality in certain error estimates. Here we will suggest a different approach to stability using regularization of the boundary condition of the system \eqref{eq:EL1}-\eqref{eq:EL3}. 
\subsection{Regularization through perturbation of the boundary condition}
To give sufficient regularization on the discrete level to obtain an a priori bound on $u_h$ we introduce the relaxed boundary condition
\[
p_h = -\beta {\sigma_h} \cdot n + u_h
\]
in the equation \eqref{eq:EL2b}-\eqref{eq:EL3b}.
It is straightforward to show that this is achieved by adding the term 
\[
\frac12 \beta \|\sigma_h \cdot n\|^2_{\partial \Omega }
\]
to \eqref{eq:lagrange}.
This addition allows us to prove a discrete inf-sup condition in the following norm
\begin{align*}
|||w_h,\varsigma_h,x_h|||^2_\beta & := \beta \|w_h\|_{H^1(\Omega)}^2 + \alpha \|w_h\|_\omega^2 + \|\gamma^{-\frac12} (\gamma \nabla w_h - \varsigma_h)\|_\Omega^2 \\ & + \|x_h\|_{1,h}^2 + \beta \|\varsigma_h \cdot n\|_{\partial \Omega}^2+ \|h \nabla \cdot \varsigma_h\|_\Omega^2
\end{align*}
with 
\[
\|x_h\|_{1,h}^2 := \sum_{T \in \mathcal{T}_h} \left(\|\nabla x_h\|_T^2 +h_T^{-1} \|[x_h]\|_{\partial T}^2 \right)
\]
where $[\cdot]$ denotes the jump of a quantity over an interior face (the orientation is irrelevant) and for boundary faces we define $[y] = y$. 
It is well know that by the Poincaré inequality $|||\cdot|||_\beta$ is a norm also on $z_h$. Hence it is also a norm on $\sigma_h$.
To prove the inf-sup condition it is convenient to introduce the compact form, for $w_h,v_h \in V_k$, $\varsigma_h,\tau_h \in RT_p$ and $x_h,y_h \in X_p$ define,
\begin{align*}
A_\gamma[(w_h,\varsigma_h,x_h),(v_h,\tau_h,y_h)]& :=(\gamma \nabla w_h - \varsigma_h,\nabla v_h -\gamma^{-1} \tau_h)_\Omega\\& + \alpha (w_h,v_h)_\omega 
+ \beta (\varsigma_h \cdot n, \tau_h \cdot n)_{\partial \Omega} \\
& - (\nabla \cdot \tau_h,x_h)_\Omega -(\nabla \cdot \varsigma_h, y_h)_\Omega.
\end{align*}
Observe that the perturbed boundary condition leads to the appearance of an additional penalty term on the boundary flux (last term of the second line).
The system \eqref{eq:EL1}-\eqref{eq:EL3} may then be written, find $u_h, \sigma_h, z_h \in \mathcal{V}$, where $\mathcal{V}:= V_k\times RT_p \times X_p$, such that
\begin{equation}\label{eq:compact}
A_\gamma[(u_h,\sigma_h,z_h),(v_h,\tau_h,y_h)] = (q,v_h)_\omega + (f,y_h)_\Omega \quad \forall (v_h, \tau_h, y_h) \in \mathcal{V}.
\end{equation}
We can prove the following inf-sup stability estimate
\begin{proposition}\label{prop:infsup}
There exists $c_0>0$, $\beta_0>0$ such that, if $\beta \leq \beta_0 h$ then for all $(w_h, \varsigma_h , x_h) \in \mathcal{V}$ there holds
\[
c_0 |||w_h,\varsigma_h,x_h|||_\beta \leq \sup_{(v_h,\tau_h ,y_h) \in \mathcal{V}\setminus{0}}\frac{A_\gamma[(w_h,\varsigma_h,x_h),(v_h,\tau_h,y_h)]}{|||v_h,\tau_h,y_h|||_\beta}.
\]
\end{proposition}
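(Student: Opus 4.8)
The plan is to establish the inf-sup condition by the standard technique of constructing, for any given $(w_h,\varsigma_h,x_h)\in\mathcal{V}$, a suitable test function $(v_h,\tau_h,y_h)\in\mathcal{V}$ whose pairing through $A_\gamma$ controls $|||w_h,\varsigma_h,x_h|||_\beta^2$ from below, while having norm bounded by a constant times $|||w_h,\varsigma_h,x_h|||_\beta$. The test function will be a linear combination (with $h$-dependent weights to be tuned) of several pieces. First, the "diagonal" choice $(v_h,\tau_h,y_h)=(w_h,-\varsigma_h,-x_h)$ immediately yields $\|\gamma^{-\frac12}(\gamma\nabla w_h-\varsigma_h)\|_\Omega^2+\alpha\|w_h\|_\omega^2+\beta\|\varsigma_h\cdot n\|_{\partial\Omega}^2$, since the two $\nabla\cdot$ terms cancel. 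This gives control of three of the six terms in the triple norm, so the remaining work is to recover $\beta\|w_h\|_{H^1(\Omega)}^2$, $\|x_h\|_{1,h}^2$, and $\|h\nabla\cdot\varsigma_h\|_\Omega^2$.

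To get $\|h\nabla\cdot\varsigma_h\|_\Omega^2$ I would use the test choice $y_h = -h^2\nabla\cdot\varsigma_h$ (which is in $X_p$ since $\nabla\cdot RT_p\subset X_p$), producing $\|h\nabla\cdot\varsigma_h\|_\Omega^2$ from the last term of $A_\gamma$, with the cross terms from $(w_h,v_h)_\omega$-type contributions being absent and the only coupling being harmless. To control $\|x_h\|_{1,h}^2$ I would exploit the term $-(\nabla\cdot\tau_h,x_h)_\Omega$: integrating by parts elementwise gives $\sum_T(\tau_h,\nabla x_h)_T-\sum_{F}(\{\tau_h\cdot n\},[x_h])_F$ up to boundary terms, so choosing $\tau_h$ as a Raviart--Thomas function realizing $\nabla x_h$ locally and the jumps $h_T^{-1}[x_h]$ on faces (a standard construction, using that $x_h$ is piecewise polynomial and $RT_p$ has the requisite degrees of freedom) yields a lower bound of the form $\|x_h\|_{1,h}^2$ minus terms absorbable into the already-controlled quantities. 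Finally, to recover $\beta\|w_h\|_{H^1(\Omega)}^2$: note $\beta\|\nabla w_h\|_\Omega^2\lesssim\beta\|\gamma^{-\frac12}(\gamma\nabla w_h-\varsigma_h)\|_\Omega^2+\beta\|\gamma^{-\frac12}\varsigma_h\|_\Omega^2$, the first piece already controlled; for the second I use that $\|\varsigma_h\|_\Omega$ is bounded via $\|\gamma^{-\frac12}(\gamma\nabla w_h-\varsigma_h)\|_\Omega$, $\|\nabla w_h\|_\Omega$ — and then a discrete Poincaré-type argument or a boundary-plus-interior estimate, using $\beta\|\varsigma_h\cdot n\|_{\partial\Omega}^2$ and $\beta\|h\nabla\cdot\varsigma_h\|_\Omega^2$ terms, closes the loop provided $\beta\le\beta_0 h$; this scaling is exactly what makes the various inverse-inequality constants (of the form $h^{-1}$) multiply against $\beta$ to give something $O(1)$ or smaller, so they can be absorbed.

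The main obstacle I anticipate is the bookkeeping of the cross terms and the choice of the $h$-dependent weights in the linear combination of test functions so that every "wrong-sign" contribution is strictly dominated. In particular, when I add the $\tau_h$ realizing $\nabla x_h$ and the jumps, this $\tau_h$ also feeds into the first term $(\gamma\nabla w_h-\varsigma_h,-\gamma^{-1}\tau_h)_\Omega$ and into $\beta(\varsigma_h\cdot n,\tau_h\cdot n)_{\partial\Omega}$ and $-(\nabla\cdot\tau_h,x_h)_\Omega$ itself; controlling $\|\tau_h\|_{H_{div}}$ by $\|x_h\|_{1,h}$ (with an $h^{-1}$ loss on the divergence, hence the need for the weight and the $\beta\le\beta_0 h$ assumption on the boundary term) and then using Young's inequality with carefully tuned small parameters is where the argument is delicate. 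The regularity assumption $u\in H^2$ and unique continuation play no role here since this is a purely discrete, data-independent stability statement; the only structural facts needed are the surjectivity $\nabla\cdot:RT_p\to X_p$, the local degrees of freedom of $RT_p$, elementwise inverse and trace inequalities, and the Poincaré inequality (for the claim that $|||\cdot|||_\beta$ controls $z_h$ and hence $\sigma_h$, already noted in the text).
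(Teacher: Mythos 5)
Your overall architecture (build a composite test function; a ``diagonal'' part for the least--squares terms; a constructed $\tau_h$ exploiting the $RT_p$ degrees of freedom to recover $\|x_h\|_{1,h}$; use $\beta\leq\beta_0 h$ to absorb the $h^{-1}$-type loss in the boundary norm of that constructed $\tau_h$) matches the paper, which delegates the $\beta=0$ part to \cite[Proposition 2.2]{BLO18}. But there are two concrete problems. First, a sign error: with $(v_h,\tau_h,y_h)=(w_h,-\varsigma_h,-x_h)$ the first term of $A_\gamma$ is $(\gamma\nabla w_h-\varsigma_h,\nabla w_h+\gamma^{-1}\varsigma_h)_\Omega=\|\gamma^{\frac12}\nabla w_h\|_\Omega^2-\|\gamma^{-\frac12}\varsigma_h\|_\Omega^2$ (a difference of squares, possibly negative), the boundary term is $-\beta\|\varsigma_h\cdot n\|_{\partial\Omega}^2$, and the two divergence terms add to $+2(\nabla\cdot\varsigma_h,x_h)_\Omega$ rather than cancelling. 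The choice that actually yields $\|\gamma^{-\frac12}(\gamma\nabla w_h-\varsigma_h)\|_\Omega^2+\alpha\|w_h\|_\omega^2+\beta\|\varsigma_h\cdot n\|_{\partial\Omega}^2$ with cancelling divergence terms is $(w_h,+\varsigma_h,-x_h)$. This is fixable, but in an argument whose difficulty you correctly locate in ``bookkeeping of cross terms,'' it matters.

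The genuine gap is the recovery of $\beta\|w_h\|_{H^1(\Omega)}^2$. Your route --- bound $\|\varsigma_h\|_\Omega$ by $\|\gamma^{-\frac12}(\gamma\nabla w_h-\varsigma_h)\|_\Omega$ and $\|\nabla w_h\|_\Omega$, then close the loop with a ``boundary-plus-interior estimate'' using $\beta\|\varsigma_h\cdot n\|_{\partial\Omega}^2$ and $\|h\nabla\cdot\varsigma_h\|_\Omega^2$ --- is circular (it reintroduces $\|\nabla w_h\|_\Omega$, the quantity you are trying to control) and the auxiliary claim is false: the $L^2(\Omega)$ norm of $\varsigma_h$ is not controlled by its normal trace and divergence (divergence-free interior Raviart--Thomas bubbles with vanishing normal trace are a counterexample). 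It also only addresses the seminorm, not $\|w_h\|_\Omega$. The paper's device, which is the essential new ingredient relative to \cite{BLO18}, is to test with $(v_h,\tau_h,y_h)=(\xi w_h,0,\xi w_h)$, i.e.\ to put $w_h$ into the Lagrange-multiplier slot: then $-\xi(\varsigma_h,\nabla w_h)_\Omega-\xi(\nabla\cdot\varsigma_h,w_h)_\Omega$ telescopes by integration by parts into the single boundary term $-\xi(\varsigma_h\cdot n,w_h)_{\partial\Omega}$, which is absorbed by the new penalty $\beta\|\varsigma_h\cdot n\|_{\partial\Omega}^2$ precisely when $\xi\lesssim\beta$ (whence only $\beta\|w_h\|_{H^1}^2$, not $\|w_h\|_{H^1}^2$, appears in the norm), while the remaining positive part $\xi\|\gamma^{\frac12}\nabla w_h\|_\Omega^2+\xi\alpha\|w_h\|_\omega^2$ controls $\xi\|w_h\|_{H^1(\Omega)}^2$ through the generalized Poincar\'e inequality with the $\omega$-term. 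Without this step the proposition does not follow from the ingredients you list.
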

\begin{proof}
Following the steps of the proof of \cite[Proposition 2.2]{BLO18} it is straightforward to show that for a certain $\varsigma^* \in RT_p$ and $x^* \in X_p$ the following inequalities holds for $\beta=0$, 
\[
 |||w_h,\varsigma_h,x_h|||_0^2 \lesssim A_\gamma[(w_h,\varsigma_h,x_h),(w_h,\varsigma^*,x^*)]
\]
and
\[
|||w_h,\varsigma^*,x^*|||_0 \lesssim |||w_h,\varsigma_h,x_h|||_0.
\]
Now take $v_h =\xi w_h$, $\tau_h = 0$ and $y_h = \xi w_h$, for some $\xi>0$ to be fixed. For this test function we see that
\[
\xi \|\gamma^{\frac12} \nabla w_h\|_\Omega^2 + \xi \alpha \|w_h\|_\omega^2 -\xi (\varsigma_h \cdot n, w_h)_{\partial \Omega} = A_\gamma[(w_h,\varsigma_h,x_h),(\xi w_h,0,\xi w_h)].
\]
Introducing the Poincar\'e inequality, with $\gamma_{min} := \min_{x \in \Omega} \gamma(x)$,
\[
C_P^2 \gamma_{min}\|w_h\|_{H^1(\Omega)}^2 \leq  \|\gamma^{\frac12} \nabla w_h\|_\Omega^2 +  \alpha \|w_h\|_\omega^2,
\]
we see that
\[
\xi C_P^2 \gamma_{min}\| w_h\|_{H^1(\Omega)}^2  -\xi (\varsigma_h \cdot n, w_h)_{\partial \Omega} \leq A_\gamma[(w_h,\varsigma_h,x_h),(\xi w_h,0,\xi w_h)].
\]
By the Cauchy-Schwarz inequality followed by a trace inequality and an arithmetic-geometric inequality we see that
\[
\xi (\varsigma_h \cdot n, w_h)_{\partial \Omega} \leq \xi C_T^2 C_P^{-2} \gamma_{min}^{-1} \|\varsigma_h \cdot n\|_{\partial \Omega}^2 + \xi \frac12 C_P^2 \gamma_{min} \| w_h\|_{H^1(\Omega)}^2.
\]
Hence
\begin{equation}\label{eq:inter_step}
\frac12 \xi C_P \gamma_{min}\| w_h\|_{H^1(\Omega)}^2  -\xi C_T^2 C_P^{-2} \gamma_{min}^{-1}  \|\varsigma_h \cdot n\|_{\partial \Omega}^2 \leq A_\gamma[(w_h,\varsigma_h,x_h),(\xi w_h,0,\xi w_h)].
\end{equation}
It follows that for sufficiently small $\xi$ we have when $\beta>0$,
\[
|||w_h,\varsigma_h,x_h|||_\beta^2 \lesssim A_\gamma[(w_h,\varsigma_h,x_h),((1+\xi) w_h,\varsigma^*,x^*+\xi w_h)].
\]
Note that since $\beta>0$ the negative term in the left hand side of \eqref{eq:inter_step} can be controlled by the term $\beta \|\varsigma_h \cdot n\|_{\partial \Omega}^2$, if $\xi$ is chosen so that $\xi C_T^2 C_P^{-2} \gamma_{min}^{-1} < \beta$.
To conclude we must show that
\begin{align*}
|||(1+\xi) w_h,\varsigma^*,x^* + \xi w_h|||_\beta  & \lesssim |||w_h,\varsigma_h,x_h|||_0 +  \beta^{\frac12} \| w_h\|_{H^1(\Omega)}+ \beta^{\frac12} \|\varsigma^*\cdot n\|_{\partial \Omega}\\
& \lesssim |||(w_h,\varsigma_h,x_h|||_\beta.
\end{align*}
The last inequality follows by the assumption on $\beta$ by recalling from \cite{BLO18} that
\[
\|\varsigma^*\cdot n\|_{\partial \Omega} \lesssim \|\varsigma_h\cdot n\|_{\partial \Omega} + \|h^{-1} x_h\|_{\partial \Omega}.
\]
Therefore
\[
\beta^{\frac12} \|\varsigma^*\cdot n\|_{\partial \Omega} \lesssim \beta^{\frac12}\|\varsigma_h\cdot n\|_{\partial \Omega} + \beta_0 \|h^{-\frac12} x_h\|_{\partial \Omega}\lesssim |||(w_h,\varsigma_h,x_h|||_\beta.
\]
\squareproof
\end{proof}
\begin{remark}
Note that the constant $c_0$ of the bound in Proposition \ref{prop:infsup} is not independent of $h$, since $\beta = O(h)$. Even if the coefficient could be made independent on $h$, by changing the scaling in $h$ on the boundary of the dual variable, this would not imply stability of the original problem, since $\beta = O(1)$ implies the  nonconsistent perturbation of a boundary condition.
\end{remark}
\section{Conditional stability estimates}
Before proving error estimates we need to introduce the stability estimates for the unique continuation problem that the estimates will be based on. The following theorem holds
\begin{theorem}\label{thm:cond_stab}
Let $B \subset \subset \Omega$ and $\omega \subset \Omega$ be two connected sets with non-zero $d$-measure, then there exists $C>0$ and $\tau_0 \in (0,1)$ such that for all $v \in H^1(\Omega)$
\[
\|v\|_B \leq C (\|v\|_\omega + \|\nabla \cdot (\gamma \nabla v)\|_{H^{-1}(\Omega)})^{\tau_0} (\|v\|_\Omega + \|\nabla \cdot (\gamma \nabla v)\|_{H^{-1}(\Omega)})^{(1-\tau_0)}, 
\]
If in addition $|\nabla \ln (\gamma)|$ is sufficiently small then there exists $\tau_1 \in (0,1)$ such that for all $v \in H^1(\Omega)$
\[
\|v\|_{H^1(B)} \leq C (\|v\|_\omega + \|\gamma^{-1}\nabla \cdot (\gamma \nabla v)\|_{H^{-1}(\Omega)})^{\tau_1} (\|v\|_\Omega + \|\gamma^{-1} \nabla \cdot (\gamma \nabla v)\|_{H^{-1}(\Omega)})^{(1-\tau_1)}.
\]
\end{theorem}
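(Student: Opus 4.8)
\medskip
\noindent\emph{Proof plan.} I would derive both inequalities from a local three-ball (Hadamard-type) inequality for the operator $L_\gamma v:=\nabla\cdot(\gamma\nabla v)$ and then propagate it to $B$ by a finite chain-of-balls argument, using that $\Omega$ is connected and that $\overline B,\overline\omega$ sit inside it. The local inequality rests on a Carleman estimate for $L_\gamma$ valid for Lipschitz coefficients bounded below: for concentric balls $B_{\rho_1}\subset B_{\rho_2}\subset B_{\rho_3}\subset\subset\Omega$ there exist $C>0$ and $\kappa\in(0,1)$, depending only on the radii and on $\gamma_0,\|\gamma\|_{W^{1,\infty}(\Omega)}$, such that
\[
\|v\|_{B_{\rho_2}}\le C\bigl(\|v\|_{B_{\rho_1}}+\|L_\gamma v\|_{H^{-1}(\Omega)}\bigr)^{\kappa}\bigl(\|v\|_{B_{\rho_3}}+\|L_\gamma v\|_{H^{-1}(\Omega)}\bigr)^{1-\kappa}
\]
for all $v\in H^1(\Omega)$. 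This is obtained in the classical way: insert a radial cut-off $\chi v$ into the Carleman estimate, note that the commutator $[L_\gamma,\chi]v$ is supported in the annuli near $\partial B_{\rho_1}$ and near $\partial B_{\rho_3}$, and optimize over the large Carleman parameter $\tau$ to convert the resulting weighted bound into the Hölder-type inequality. The one point needing care is the $H^{-1}$ source: one needs the Carleman estimate stated with a weighted $H^1$ norm of $v$ on the left and a weighted $H^{-1}$ norm of $L_\gamma v$ on the right, or else one writes $L_\gamma v=f_0+\nabla\cdot F$ with $\|f_0\|_\Omega+\|F\|_\Omega\lesssim\|L_\gamma v\|_{H^{-1}(\Omega)}$ and absorbs $F$ with one extra integration by parts inside the standard $L^2$-based estimate.

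\medskip
Given the local inequality, I would fix a compact connected set $K$ with $\overline\omega\cup\overline B\subset\mathrm{int}\,K$ and $\mathrm{dist}(K,\partial\Omega)>0$, cover $K$ by a finite chain of balls $B^{(0)},\dots,B^{(N)}$, each admitting a concentric triple contained in $\Omega$, with $B^{(0)}\subset\omega$, $B\subset\bigcup_j B^{(j)}$, and consecutive members overlapping enough that the $\rho_1$-ball of step $j{+}1$ lies inside the $\rho_2$-ball of step $j$. Iterating the three-ball inequality along the chain, taking the outer contribution at each step as $\|v\|_\Omega$, and recombining the products with Young's inequality $ab\le\varepsilon a^p+C_\varepsilon b^{p'}$ to handle the additive source terms, yields
\[
\|v\|_B\le C\bigl(\|v\|_\omega+\|L_\gamma v\|_{H^{-1}(\Omega)}\bigr)^{\tau_0}\bigl(\|v\|_\Omega+\|L_\gamma v\|_{H^{-1}(\Omega)}\bigr)^{1-\tau_0}
\]
with $\tau_0=\kappa^{\,N}\in(0,1)$, which is the first assertion.

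\medskip
For the $H^1$ statement I would run the same scheme for the Laplacian with a drift: since $\gamma^{-1}L_\gamma v=\Delta v+\nabla\ln\gamma\cdot\nabla v=:g$, the term $\nabla\ln\gamma\cdot\nabla v$ is a first-order perturbation of $\Delta$, and the smallness of $|\nabla\ln\gamma|$ enters precisely to keep it an absorbable perturbation at the level of an $H^1$ Carleman estimate for $\Delta$, so that the full weighted $H^1$ norm of $v$ survives on the left. Chaining as above then gives the bound with $\|\cdot\|_{H^1}$ in place of $\|\cdot\|_{L^2}$ throughout, and a Caccioppoli inequality — $\|v\|_{H^1(\omega')}\lesssim\mathrm{dist}(\omega',\partial\omega)^{-1}\|v\|_\omega+\|g\|_{H^{-1}(\Omega)}$ on a slightly shrunk $\omega'$, together with the analogous interior estimate $\|v\|_{H^1(K)}\lesssim\|v\|_\Omega+\|g\|_{H^{-1}(\Omega)}$ — reduces the $H^1$ norms on the right to the $L^2$ norms in the statement, producing $\tau_1\in(0,1)$.

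\medskip
The main obstacles I anticipate are, first, handling the $H^{-1}$ right-hand side cleanly within the Carleman framework (a weighted dual estimate, or the splitting $L_\gamma v=f_0+\nabla\cdot F$ plus an integration by parts), and, second, in the $H^1$ part, making the smallness threshold on $\|\nabla\ln\gamma\|_{L^\infty(\Omega)}$ explicit relative to the absolute constant governing the gradient term in the Carleman estimate for $\Delta$, so that the $H^1$ three-ball inequality genuinely closes. The remaining ingredients — the chain construction, the exponent bookkeeping, and the Caccioppoli reductions — are routine; indeed these estimates are of the classical Alessandrini–Rondi–Rosset–Vessella type, so an alternative is to invoke those results directly and only verify the $W^{1,\infty}$-coefficient and $H^{-1}$-source modifications.
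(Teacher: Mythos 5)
Your plan is essentially the paper's own argument: the paper disposes of the first inequality by citing \cite[Theorem 5.1]{ARRV09} and, for the second, makes exactly your reduction $\gamma^{-1}\nabla\cdot(\gamma\nabla v)=\Delta v+\nabla\ln\gamma\cdot\nabla v$ with drift $b=\nabla\ln\gamma$ and invokes the advection--diffusion estimate of \cite[Corollary 2]{BNO20} together with the same propagation-of-smallness/three-ball chaining you sketch, rather than re-deriving the underlying Carleman machinery. The only (harmless) discrepancy is where the smallness of $|\nabla\ln\gamma|$ enters: you place it in absorbing the drift inside the Carleman estimate, whereas the paper uses it to guarantee coercivity (via the Poincar\'e inequality) of the local Dirichlet problem $-\Delta w-b\cdot\nabla w=\gamma^{-1}f$ on small balls, which is what the hypotheses of the cited corollary require.
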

\begin{proof}
The first claim is proven in \cite[Theorem 5.1]{ARRV09}.

For the second claim we will use \cite[Corollary 2]{BNO20} where the inequality was proven in the case of an advection--diffusion equation. Set $b = \nabla \ln(\gamma)$. Then observe that for any ball $B_0$ in $\Omega$, the problem 
\[
-\Delta w -  b \cdot \nabla w = \gamma^{-1} f \mbox{ in } B_0
\]
with $w = 0$ on $\partial B_0$ is well posed under the assumptions (where the maximum allowed size of $b$ will depend on the Poincar\'e inequality on the ball $B_0$). Then the result of \cite[Corollary 2]{BNO20} can be shown to hold and we conclude using a classical propagation of smallness argument \cite{Rob98} (see also \cite[Theorem 5.1]{ARRV09}).
\squareproof
\end{proof}
\subsection{Error estimates}
To prove error estimates we proceed in two steps. First we show that for a suitably chosen $\beta$, the error in the norm $|||\cdot,\cdot,\cdot|||_\beta$ converges with a similar rate as one would expect from a well-posed problem. This however does not imply the convergence in any norm uniform in the mesh-size $h$. Indeed without any further stability information of the problem this convergence does not yield any useful error estimate. Instead we must apply the result of Theorem \ref{thm:cond_stab}, which will allow us to deduce error estimates with a rate in the interior of the domain. We will assume that we only have access to $\tilde \gamma = \gamma + \delta \gamma$, $\tilde q = q + \delta q$ and $\tilde f = f + \delta f$. Here the perturbations are assumed to satisfy $\delta q \in L^2(\omega)$, $\delta f \in L^2(\Omega)$ and $\delta \gamma \in L^{\infty}(\Omega)$. To simplify the notation we let 
\[
\delta = \alpha^{\frac12} \|\delta q\|_\omega + \|\delta f\|_\Omega + \frac{\delta \gamma}{\tilde \gamma} \|\nabla u\|_{\Omega}. 
\]

Our approximation may then be written find $\tilde u_h , \tilde \sigma_h, \tilde z_h \in \mathcal{V}$ such that
\begin{equation}\label{eq:pert_compact}
A_{\tilde \gamma}[(\tilde u_h,\tilde \sigma_h,\tilde z_h),(v_h,\tau_h,y_h)] = (\tilde q,v_h)_\omega + (\tilde f,y_h)_\Omega \quad \forall (v_h , \tau_h, y_h) \in \mathcal{V}.
\end{equation}

First note that the formulation \eqref{eq:compact} is consistent in the sense that for the exact solution $u$, $\sigma = \gamma \nabla u$ there holds
\begin{align}\nonumber
A_{\tilde \gamma}[(u,\sigma,0),(v_h,\tau_h,y_h)] & =  (f,y_h)_\Omega+ \beta (\sigma \cdot n, \tau_h)_{\partial \Omega} + (q,v_h)_\omega \\
&+ (\delta \gamma \nabla u,\nabla v_h -{\tilde \gamma}^{-1} \tau_h)_\Omega \quad \forall (v_h , \tau_h, y_h) \in \mathcal{V}.\label{eq:consist}
\end{align}
Combining \eqref{eq:compact} and \eqref{eq:pert_compact} we get the perturbed Galerkin orthogonality
\begin{align}\nonumber
A_{\tilde \gamma}[(\tilde u_h - u,\tilde \sigma_h - \sigma,\tilde z_h),(v_h,\tau_h,y_h)] &= \alpha (\delta q,v_h)_\omega + (\delta f,y_h)_\Omega - \beta (\sigma \cdot n, \tau_h \cdot n)_{\partial \Omega}\\
&- (\delta \gamma \nabla u,\nabla v_h -{\tilde \gamma}^{-1} \tau_h)_\Omega \quad \forall (v_h , \tau_h, y_h) \in \mathcal{V}.\label{eq:galortho}
\end{align}
Let $\pi_h: L^2(\Omega) \mapsto X_p$ denote the standard $L^2$-projection on $X_p$, and $i_h: H^1(\Omega) \mapsto V_k$ some $H^1$-stable interpolant with optimal approximation properties. Also let $r_h: [H^1(\Omega)]^d \mapsto RT_p$ denote the standard interpolant associated with the Raviart-Thomas space \cite{RT77}. Note in particular that for any face $F$ in the mesh $r_h v\vert_F = \pi_F v$ where $\pi_F v$ denotes the $L^2$-projection of $v\vert_F$ onto the space $\mathbb{P}_p(F)$.
It is straightforward to prove the following interpolation error estimate for these spaces
\begin{equation}\label{eq:interpol}
|||(u - i_h u, \sigma - r_h \sigma, 0)|||_0 \leq C(h^k|u|_{H^{k+1}(\Omega)} + h^{p+1} \|\sigma\|_{[H^{p+1}(\Omega)]^d}).
\end{equation}
Noting that by the definition of the Raviart-Thomas interpolant there holds
\[
(\nabla \cdot (\sigma - r_h \sigma), y_h)_\Omega = 0 \mbox{ for all } y_h \in X_p
\]
we may apply the Cauchy-Schwarz inequality to show the continuity
\begin{equation}\label{eq:continuity}
A_{\tilde \gamma}[(i_h u - u,r_h \sigma - \sigma,0),(v_h,\tau_h,y_h)] \leq |||(u - i_h u, \sigma - r_h \sigma, 0)|||_0 |||(v_h, \tau_h, y_h)|||_0.
\end{equation}
Notice that $\beta=0$ can be taken in the norms of the right hand side since $((r_h \sigma - \sigma) \cdot n, \tau_h \cdot n)_F=0$ for all faces $F \subset \partial \Omega$.

Using the stability of Proposition \ref{prop:infsup} together with the bounds \eqref{eq:galortho}, \eqref{eq:continuity} and \eqref{eq:interpol} it is straighforward to prove the following error estimate in the $|||\cdot,\cdot,\cdot|||_\beta$ norm.
\begin{proposition}\label{prop:error1}
Let $u$ be the solution of the unique continuation problem \eqref{eq:UCf} and let $\tilde u_h \in V_k$, $\tilde \sigma_h \in RT_p$, $z_h \in X_p$ be the solution of \eqref{eq:pert_compact} then there holds
\[
|||(u - \tilde u_h, \sigma - \tilde \sigma_h, \tilde z_h)|||_\beta \leq C(h^k|u|_{H^{k+1}(\Omega)} + h^{p+1} \|\sigma\|_{[H^{p+1}(\Omega)]^d} + \beta^{\frac12} \|\sigma \cdot n\|_{\partial \Omega} + \delta).
\]
\end{proposition}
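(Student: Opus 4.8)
The plan is to combine the discrete inf-sup stability of Proposition~\ref{prop:infsup} with the perturbed Galerkin orthogonality \eqref{eq:galortho} in the standard Strang-type fashion. First I would split the error into an interpolation part and a discrete part: write $u - \tilde u_h = (u - i_h u) + (i_h u - \tilde u_h)$, $\sigma - \tilde \sigma_h = (\sigma - r_h \sigma) + (r_h \sigma - \tilde \sigma_h)$, and $\tilde z_h = (\tilde z_h - 0)$. Set $e_h := (i_h u - \tilde u_h, r_h \sigma - \tilde \sigma_h, \tilde z_h) \in \mathcal{V}$ and $\eta := (u - i_h u, \sigma - r_h \sigma, 0)$. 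By the triangle inequality it suffices to bound $|||e_h|||_\beta$, since $|||\eta|||_\beta$ is controlled by \eqref{eq:interpol} (note $\beta=0$ is admissible in $|||\eta|||$ because the Raviart-Thomas interpolant matches the boundary normal trace, and the $\beta\|\cdot\|_{H^1}$ term costs only an extra $\beta^{1/2}|u|_{H^1} \lesssim h^{1/2}\cdot(\text{l.o.t.})$, absorbed into the stated bound).

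Next I would apply Proposition~\ref{prop:infsup} to $e_h$: there is a test tuple $(v_h,\tau_h,y_h)$ with $|||v_h,\tau_h,y_h|||_\beta = 1$ (or normalize afterwards) such that
\[
c_0 |||e_h|||_\beta \leq A_{\tilde\gamma}[e_h,(v_h,\tau_h,y_h)].
\]
Then I split $A_{\tilde\gamma}[e_h,\cdot] = A_{\tilde\gamma}[(\tilde u_h - u, \tilde\sigma_h - \sigma, \tilde z_h),\cdot] + A_{\tilde\gamma}[(i_h u - u, r_h\sigma - \sigma, 0),\cdot] \cdot(-1)$, being careful with signs: $e_h = -(\tilde u_h - u, \tilde\sigma_h - \sigma, -\tilde z_h) + \eta$ up to the sign convention on $z_h$, so more precisely $A_{\tilde\gamma}[e_h,\cdot] = -A_{\tilde\gamma}[(\tilde u_h - u,\tilde\sigma_h - \sigma,\tilde z_h),\cdot] + A_{\tilde\gamma}[\eta,\cdot]$. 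The first term is handled by the perturbed Galerkin orthogonality \eqref{eq:galortho}, bounding each right-hand side contribution: $\alpha(\delta q,v_h)_\omega \leq \alpha^{1/2}\|\delta q\|_\omega \cdot \alpha^{1/2}\|v_h\|_\omega \lesssim \alpha^{1/2}\|\delta q\|_\omega |||v_h,\tau_h,y_h|||_\beta$; similarly $(\delta f, y_h)_\Omega \lesssim \|\delta f\|_\Omega |||\cdots|||_\beta$ (using that $\|y_h\|_{1,h}$ and hence, via a discrete Poincaré inequality valid on $X_p$ in the $\|\cdot\|_{1,h}$ norm, $\|y_h\|_\Omega$ is controlled); the term $\beta(\sigma\cdot n,\tau_h\cdot n)_{\partial\Omega} \leq \beta^{1/2}\|\sigma\cdot n\|_{\partial\Omega}\cdot\beta^{1/2}\|\tau_h\cdot n\|_{\partial\Omega} \lesssim \beta^{1/2}\|\sigma\cdot n\|_{\partial\Omega}|||\cdots|||_\beta$; and the consistency defect $(\delta\gamma\nabla u,\nabla v_h - \tilde\gamma^{-1}\tau_h)_\Omega \lesssim \frac{\delta\gamma}{\tilde\gamma}\|\nabla u\|_\Omega \cdot \|\gamma^{-1/2}(\gamma\nabla v_h - \tau_h)\|_\Omega$, which after a short manipulation (write $\nabla v_h - \tilde\gamma^{-1}\tau_h = \tilde\gamma^{-1}(\tilde\gamma\nabla v_h - \tau_h)$ and compare with the norm term) is $\lesssim \delta\,|||v_h,\tau_h,y_h|||_\beta$. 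Summing, the first term is $\lesssim \delta$. The second term $A_{\tilde\gamma}[\eta,(v_h,\tau_h,y_h)]$ is exactly the continuity bound \eqref{eq:continuity}, giving $\lesssim |||\eta|||_0 \lesssim h^k|u|_{H^{k+1}(\Omega)} + h^{p+1}\|\sigma\|_{[H^{p+1}(\Omega)]^d}$.

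Putting these together yields $c_0|||e_h|||_\beta \lesssim h^k|u|_{H^{k+1}(\Omega)} + h^{p+1}\|\sigma\|_{[H^{p+1}(\Omega)]^d} + \beta^{1/2}\|\sigma\cdot n\|_{\partial\Omega} + \delta$, and the triangle inequality with the interpolation estimate gives the claimed bound for $|||(u - \tilde u_h,\sigma - \tilde\sigma_h,\tilde z_h)|||_\beta$. The main obstacle I anticipate is bookkeeping rather than conceptual: getting the signs of the $z_h/p_h$ variable consistent between \eqref{eq:galortho} and the definition of $e_h$, and correctly handling the $\beta$-dependence — in particular verifying that the $\beta\|v_h\|_{H^1(\Omega)}$ term in $|||\cdot|||_\beta$ does not spoil the estimates on the $\delta q$, $\delta f$ and consistency terms (it does not, since those test-function factors are all bounded by $|||\cdot|||_\beta$ with constants independent of $\beta$), and that $\beta^{1/2}\|\sigma\cdot n\|_{\partial\Omega}$ genuinely appears with a $\beta$-independent constant. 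A secondary subtlety is that the continuity estimate \eqref{eq:continuity} is stated with $\beta=0$ norms on both sides, so one must note that $|||(v_h,\tau_h,y_h)|||_0 \leq |||(v_h,\tau_h,y_h)|||_\beta$ to close the loop, and that the interpolation error for $v_h = \eta$'s own $\beta$-contribution is harmless because $\beta \leq \beta_0 h$.
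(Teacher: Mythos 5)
Your proposal is correct and follows essentially the same route as the paper's proof: the same splitting into interpolation and discrete errors, the inf-sup bound of Proposition~\ref{prop:infsup} applied to the discrete part, the perturbed Galerkin orthogonality \eqref{eq:galortho} with Cauchy--Schwarz on the data, boundary and coefficient-perturbation terms, and the continuity bound \eqref{eq:continuity} with the interpolation estimate \eqref{eq:interpol} to finish. The sign bookkeeping and $\beta$-dependence issues you flag are handled implicitly in the paper in exactly the way you anticipate.
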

\begin{proof}
The error can be decomposed into the interpolation error and a discrete error, 
\[
u - \tilde u_h = u - i_h u + \underbrace{i_h u - \tilde u_h}_{e_h},
\]
\[
\sigma - \tilde \sigma_h = \sigma - r_h \sigma + \underbrace{r_h \sigma - \tilde \sigma_h}_{\varsigma_h}.
\]
Recalling \eqref{eq:interpol} it is enough to prove the bound for $e_h$ and $\varsigma_h$. Applying the stability of Proposition \ref{prop:infsup} we have
\[
c_0 |||e_h,\varsigma_h,\tilde z_h|||_\beta \leq \sup_{(v_h,\tau_h ,y_h) \in \mathcal{V}\setminus \{0\}}\frac{A_{\tilde \gamma}[(e_h,\varsigma_h,\tilde z_h),(v_h,\tau_h,y_h)]}{|||v_h,\tau_h,y_h|||_\beta}.
\]
Using the perturbed Galerkin orthogonality we see that
\begin{align}\label{eq:pert_eqA}
A_{\tilde \gamma}[(e_h,\varsigma_h,\tilde z_h),(v_h,\tau_h,y_h)] &= A_{\tilde \gamma}[(i_h u - u,r_h \sigma - \sigma,0),(v_h,\tau_h,y_h)] \\
& -\alpha (\delta q,v_h)_\omega - (\delta f,y_h)_\Omega + \beta (\sigma \cdot n, \tau_h \cdot n)_{\partial \Omega}\\
& + (\delta \gamma \nabla u,\nabla v_h -{\tilde \gamma}^{-1} \tau_h)_\Omega.
\end{align}
Applying the Cauchy-Schwarz inequality to the last four terms of the right hand side we get
\begin{align*}
-\alpha (\delta q,v_h)_\omega & - (\delta f,y_h)_\Omega  + \beta (\sigma \cdot n, \tau_h \cdot n)_{\partial \Omega}  + (\delta \gamma \nabla u,\nabla v_h -{\tilde \gamma}^{-1} \tau_h)_\Omega\\  & \leq (\alpha^{\frac12} \|\delta q\|_\omega + \|\delta f\|_\Omega + \beta^{\frac12} \|\sigma \cdot n\|_{\partial \Omega} + \delta \gamma \tilde \gamma^{-\frac12} \|\nabla u\|_\Omega) \\ & \times (\alpha^{\frac12} \|v_h\|_\omega + \|y_h\|_\Omega+ \beta^{\frac12} \|\tau_h \cdot n\|_{\partial \Omega} + \|\tilde \gamma^{\frac12} \nabla v_h -{\tilde \gamma}^{-\frac12} \tau_h\|_\Omega).
\end{align*}
Using the Poincar\'e inequality $\|y_h\|_\Omega \leq C \|y_h\|_{1,h}$ we then obtain
\begin{align*}
-\alpha (\delta q,v_h)_\omega & - (\delta f,y_h)_\Omega  + \beta (\sigma \cdot n, \tau_h \cdot n)_{\partial \Omega} + (\delta \gamma \nabla u,\nabla v_h -{\tilde \gamma}^{-1} \tau_h)_\Omega\\ 
& \leq C (\alpha^{\frac12} \|\delta q\|_\omega + \|\delta f\|_\Omega + \beta^{\frac12} \|\sigma \cdot n\|_{\partial \Omega}+ \delta \gamma \tilde \gamma^{-\frac12} \|\nabla u\|_\Omega) |||v_h,\tau_h,y_h|||_\beta\\
& \leq C (\delta + \beta^{\frac12} \|\sigma \cdot n\|_{\partial \Omega}) |||v_h,\tau_h,y_h|||_\beta.
\end{align*}
Applying the continuity \eqref{eq:continuity} to the first term of the right hand side of \eqref{eq:pert_eqA} we may deduce the bound
\[
c_0 |||e_h,\varsigma_h,\tilde z_h|||_\beta \leq C (|||u - i_h u,\sigma - r_h \sigma ,0|||_0 + \delta + \beta^{\frac12} \|\sigma \cdot n\|_{\partial \Omega}).
\]
The claim follows by applying the approximation bound \eqref{eq:interpol}.
\squareproof
\end{proof}

By inspecting the above error bound we see that the choice leading to the best convergence is $p=k-1$, $\beta = 2 k$ this leads to the following bound.
\begin{corollary}\label{cor:error2}
Let $u \in H^{k+1}(\Omega)$ be the solution of the unique continuation problem \eqref{eq:UCf}. Assume that $\sigma \in [H^{k}(\Omega)]^d$. Let $\tilde u_h \in V_k$, $\tilde \sigma_h \in RT_{k-1}$, $z_h \in X_{k-1}$ be the solution of \eqref{eq:pert_compact}, with $\beta = h^{2 k}$ then there holds
\[
|||(u - \tilde u_h, \sigma - \tilde \sigma_h, \tilde z_h)|||_\beta \leq C(h^k \Phi(u,\sigma) + \delta),
\]
where
\[
\Phi(u,\sigma) :=|u|_{H^{k+1}(\Omega)} +\|\sigma\|_{[H^{k}(\Omega)]^d}.
\]
\end{corollary}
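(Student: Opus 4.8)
The plan is to obtain Corollary~\ref{cor:error2} as a direct specialisation of Proposition~\ref{prop:error1} to the parameters $p = k-1$, $\beta = h^{2k}$, followed by absorbing the boundary flux contribution of $\sigma$ into the regularity functional $\Phi$.

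First I would check that the chosen $\beta$ is admissible for Proposition~\ref{prop:infsup}, on which Proposition~\ref{prop:error1} rests. The hypothesis there is $\beta \leq \beta_0 h$, and since $k \geq 1$ (which is in any case forced by $p = k-1 \geq 0$) we have $\beta = h^{2k} = h^{2k-1}\cdot h \leq \beta_0 h$ as soon as $h$ is small enough; thus no restriction beyond the usual $h \leq h_0$ is incurred, and Proposition~\ref{prop:error1} applies verbatim.

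Next, with $p = k-1$ the interpolation contribution $h^{p+1}\|\sigma\|_{[H^{p+1}(\Omega)]^d}$ in Proposition~\ref{prop:error1} becomes $h^{k}\|\sigma\|_{[H^{k}(\Omega)]^d}$, and with $\beta = h^{2k}$ the perturbation contribution $\beta^{\frac12}\|\sigma\cdot n\|_{\partial\Omega}$ becomes $h^{k}\|\sigma\cdot n\|_{\partial\Omega}$. It then suffices to bound $\|\sigma\cdot n\|_{\partial\Omega}$ by $\|\sigma\|_{[H^{k}(\Omega)]^d}$: since $k\geq 1$, the trace inequality gives $\|\sigma\cdot n\|_{\partial\Omega} \leq \|\sigma\|_{[L^2(\partial\Omega)]^d} \lesssim \|\sigma\|_{[H^{1}(\Omega)]^d} \leq \|\sigma\|_{[H^{k}(\Omega)]^d}$. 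Collecting the three resulting $O(h^k)$ terms together with the data term $\delta$ and setting $\Phi(u,\sigma) := |u|_{H^{k+1}(\Omega)} + \|\sigma\|_{[H^{k}(\Omega)]^d}$ gives the claimed estimate.

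There is no genuine obstacle here: the corollary is an immediate consequence of Proposition~\ref{prop:error1}, and the only points that need a moment's care are the admissibility of $\beta = h^{2k}$ in the inf--sup hypothesis and the trace bound, both of which require $k\geq 1$. One may additionally remark that this is precisely the parameter balance that renders all non-data terms of the common order $h^k$, so that no further reduction of $\beta$ — which would only weaken the $H^1(\Omega)$-control built into $|||\cdot,\cdot,\cdot|||_\beta$ — is beneficial.
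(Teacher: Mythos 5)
Your proposal is correct and follows the same route as the paper: specialise Proposition~\ref{prop:error1} to $p=k-1$, $\beta=h^{2k}$ and invoke the trace inequality $\|\sigma\cdot n\|_{\partial\Omega}\leq C\|\sigma\|_{[H^1(\Omega)]^d}$ to absorb the boundary term into $\Phi(u,\sigma)$. Your extra remark checking that $\beta=h^{2k}\leq\beta_0 h$ for small $h$ is a sensible (if implicit in the paper) verification of the inf--sup hypothesis.
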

\begin{proof}
Immediate by the bound of Proposition \ref{prop:error1}, the assumptions on $k$ and $p$ and $\beta$ and the trace inequality $\|\sigma \cdot n\|_{\partial \Omega} \leq C \|\sigma\|_{[H^1(\Omega)]^d}$.
\squareproof
\end{proof}

This estimate implies that for smooth solutions without perturbations in data the blowup rate of $\|u - u_h\|_{H^1(\Omega)}$ is contained. Indeed we have the following corollary,
\begin{corollary}\label{cor:H1}
Under the assumptions of Corollary \ref{cor:error2} there holds
\[
\|u - \tilde u_h\|_{H^1(\Omega)} \leq C (\Phi(u,\sigma) + h^{-k} \delta ).
\]
\end{corollary}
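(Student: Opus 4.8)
The plan is to extract the $H^1$-norm of the primal error directly from the triple-norm estimate of Corollary \ref{cor:error2}, exploiting that the norm $|||\cdot,\cdot,\cdot|||_\beta$ already contains a (weighted) copy of $\|w_h\|_{H^1(\Omega)}$. First I would recall the definition
\[
|||w_h,\varsigma_h,x_h|||^2_\beta = \beta \|w_h\|_{H^1(\Omega)}^2 + \alpha \|w_h\|_\omega^2 + \|\gamma^{-\frac12} (\gamma \nabla w_h - \varsigma_h)\|_\Omega^2 + \|x_h\|_{1,h}^2 + \beta \|\varsigma_h \cdot n\|_{\partial \Omega}^2+ \|h \nabla \cdot \varsigma_h\|_\Omega^2,
\]
so that in particular $\beta^{\frac12}\|w_h\|_{H^1(\Omega)} \leq |||w_h,\varsigma_h,x_h|||_\beta$ for every triple. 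Applying this with $(w_h,\varsigma_h,x_h) = (u - \tilde u_h, \sigma - \tilde\sigma_h, \tilde z_h)$ gives
\[
\beta^{\frac12}\,\|u - \tilde u_h\|_{H^1(\Omega)} \leq |||(u - \tilde u_h, \sigma - \tilde \sigma_h, \tilde z_h)|||_\beta.
\]

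Next I would insert the bound of Corollary \ref{cor:error2}, which under the stated hypotheses ($p = k-1$, $\beta = h^{2k}$, $u \in H^{k+1}(\Omega)$, $\sigma \in [H^k(\Omega)]^d$) reads
\[
|||(u - \tilde u_h, \sigma - \tilde \sigma_h, \tilde z_h)|||_\beta \leq C\bigl(h^k \Phi(u,\sigma) + \delta\bigr).
\]
Combining the two displays and dividing through by $\beta^{\frac12} = h^{k}$ yields
\[
\|u - \tilde u_h\|_{H^1(\Omega)} \leq C\,h^{-k}\bigl(h^k \Phi(u,\sigma) + \delta\bigr) = C\bigl(\Phi(u,\sigma) + h^{-k}\delta\bigr),
\]
which is the claimed estimate.

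There is no genuine obstacle here: the statement is an immediate consequence of Corollary \ref{cor:error2} once one observes that the regularization parameter $\beta = h^{2k}$ is precisely the weight attached to the $H^1$-norm of the primal variable inside $|||\cdot,\cdot,\cdot|||_\beta$. The only point worth flagging is interpretive rather than technical, namely that the factor $h^{-k}$ multiplying $\delta$ records the expected (and, in the absence of further stability information, unavoidable) amplification of data perturbations in the ill-posed problem, whereas the contribution of the consistent discretization error $h^k\Phi(u,\sigma)$ remains bounded uniformly in $h$; this is exactly the sense in which the boundary regularization "contains the blowup" of $\|u-\tilde u_h\|_{H^1(\Omega)}$.
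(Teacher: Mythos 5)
Your proposal is correct and follows essentially the same route as the paper: both extract the weighted term $\beta^{\frac12}\|u-\tilde u_h\|_{H^1(\Omega)}$ from the definition of $|||\cdot,\cdot,\cdot|||_\beta$, invoke Corollary \ref{cor:error2}, and divide by $\beta^{\frac12}=h^k$. No gaps.
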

\begin{proof}
We observe that by definition
\[
\|u - \tilde u_h\|_{H^1(\Omega)} \leq \beta^{-\frac12} |||(u - \tilde u_h,0, 0)|||_\beta.
\]
The claim follows by applying the error bound of Corollary \ref{cor:error2} and recalling that $\beta = O(h^{2k})$.
\squareproof
\end{proof}

After these preliminary considerations we are now ready to prove an interior error estimate.
\begin{theorem}\label{thm:error_cond}
Assume that the hypothesis of Corollary \ref{cor:error2} are satisfied. Also assume that $h<1$. Let $B$ and $\omega$ be the subsets of $\Omega$ defined in Theorem \ref{thm:cond_stab} then there holds
\[
\|u - \tilde u_h\|_{B} \leq C h^{k \tau_0} (\Phi(u,\sigma) + h^{-k} \delta)  
\]
and if $|\nabla \ln(\gamma)|$ is small enough,
\[
\|u - \tilde u_h\|_{H^1(B)} \leq C h^{k \tau_1} (\Phi(u,\sigma) + h^{-k} \delta).
\]
\end{theorem}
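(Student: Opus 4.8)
The plan is to combine the conditional stability estimate of Theorem~\ref{thm:cond_stab} with the global error bound of Corollary~\ref{cor:error2}, applied to the error function $v := u - \tilde u_h$. First I would verify that $v$ is an admissible argument: since $u \in H^{k+1}(\Omega) \subset H^1(\Omega)$ and $\tilde u_h \in V_k \subset H^1(\Omega)$, we have $v \in H^1(\Omega)$, so both inequalities of Theorem~\ref{thm:cond_stab} apply. The interpolation inequality then reads
\[
\|v\|_B \leq C (\|v\|_\omega + \|\nabla\cdot(\gamma\nabla v)\|_{H^{-1}(\Omega)})^{\tau_0} (\|v\|_\Omega + \|\nabla\cdot(\gamma\nabla v)\|_{H^{-1}(\Omega)})^{1-\tau_0},
\]
and the strategy is to bound each of the three ingredients $\|v\|_\omega$, $\|v\|_\Omega$ and $\|\nabla\cdot(\gamma\nabla v)\|_{H^{-1}(\Omega)}$ by the right-hand side $E := C(h^k \Phi(u,\sigma) + \delta)$ coming from Corollary~\ref{cor:error2}, or in the case of $\|v\|_\Omega$ by the weaker $h^{-k}$-deteriorated bound of Corollary~\ref{cor:H1}.

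The term $\alpha^{\frac12}\|v\|_\omega$ is directly one of the summands in $|||\cdot,\cdot,\cdot|||_\beta$, so Corollary~\ref{cor:error2} gives $\|v\|_\omega \leq C E$ (absorbing the fixed constant $\alpha$). The term $\|v\|_\Omega$ is controlled via $\|v\|_{H^1(\Omega)}$, for which Corollary~\ref{cor:H1} gives $\|v\|_\Omega \leq \|v\|_{H^1(\Omega)} \leq C(\Phi(u,\sigma) + h^{-k}\delta)$; since $h<1$ this is $\leq C h^{-k}(h^k\Phi(u,\sigma) + \delta) = C h^{-k} E$. The main technical step is the residual term: I must show $\|\nabla\cdot(\gamma\nabla v)\|_{H^{-1}(\Omega)} \leq C E$. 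For this I would test $\nabla\cdot(\gamma\nabla v)$ against an arbitrary $\phi \in H^1_0(\Omega)$, writing $\langle \nabla\cdot(\gamma\nabla v),\phi\rangle = -(\gamma\nabla v,\nabla\phi)_\Omega$, split $\gamma\nabla v = (\gamma\nabla u - \sigma) - (\gamma\nabla \tilde u_h - \tilde\sigma_h) + (\tilde\sigma_h - \sigma)$ using $\sigma = \gamma\nabla u$, and estimate: the first two bracketed terms are controlled by $\|\gamma^{-\frac12}(\gamma\nabla v - (\sigma - \tilde\sigma_h))\|_\Omega \lesssim |||\cdot|||_\beta \lesssim E$ after Cauchy-Schwarz against $\|\nabla\phi\|_\Omega \leq \|\phi\|_{H^1_0(\Omega)}$, while for the term $(\tilde\sigma_h - \sigma,\nabla\phi)_\Omega$ I integrate by parts to get $(\nabla\cdot(\sigma - \tilde\sigma_h),\phi)_\Omega$, and bound this using $\|h\nabla\cdot\varsigma_h\|_\Omega$-type control together with the constraint equation \eqref{eq:EL3b}/\eqref{eq:compact} which pins $\nabla\cdot\tilde\sigma_h$ to $-\tilde f = -(f+\delta f)$ and $\nabla\cdot\sigma = -f$, so the divergence difference is just $\delta f \leq \delta \leq E$. (One must also account for perturbed $\tilde\gamma$ versus $\gamma$ in these manipulations; the discrepancy $\delta\gamma$ contributes a term $\lesssim \frac{\delta\gamma}{\tilde\gamma}\|\nabla u\|_\Omega \leq \delta \leq E$, exactly as in the consistency relation \eqref{eq:consist}.) Collecting, each ingredient is $\leq C h^{-k} E$ at worst, and plugging $\|v\|_\omega, \text{residual} \leq CE$ and $\|v\|_\Omega \leq C h^{-k} E$ into the interpolation inequality yields $\|v\|_B \leq C E^{\tau_0} (h^{-k} E)^{1-\tau_0} = C h^{-k(1-\tau_0)} E$. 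Since $E = C(h^k\Phi(u,\sigma)+\delta)$, this is $C h^{-k(1-\tau_0)}(h^k\Phi + \delta) \cdot$, but to reach the claimed $h^{k\tau_0}(\Phi + h^{-k}\delta)$ I rewrite $E = C h^k(\Phi + h^{-k}\delta)$ so that $h^{-k(1-\tau_0)} E = C h^{k\tau_0}(\Phi(u,\sigma) + h^{-k}\delta)$, which is exactly the first asserted bound.

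For the $H^1(B)$ estimate I would repeat the argument verbatim using the second inequality of Theorem~\ref{thm:cond_stab} (valid under the smallness assumption on $|\nabla\ln\gamma|$), with $\|v\|_{H^1(B)}$ on the left and the residual now measured as $\|\gamma^{-1}\nabla\cdot(\gamma\nabla v)\|_{H^{-1}(\Omega)}$; since $\gamma$ is bounded above and below this is equivalent up to constants to the residual already estimated, so the same bounds $\leq CE$ and $\leq C h^{-k}E$ hold for the three ingredients, giving $\|v\|_{H^1(B)} \leq C h^{k\tau_1}(\Phi(u,\sigma) + h^{-k}\delta)$. The main obstacle I anticipate is the careful bookkeeping in the $H^{-1}$-residual estimate, specifically handling the boundary terms that arise when integrating by parts against $\phi \in H^1_0(\Omega)$ (these vanish because $\phi$ has zero trace, which is convenient) and correctly tracking the perturbation terms $\delta\gamma, \delta f$ so that they aggregate into the single quantity $\delta$; everything else is a direct substitution of the two corollaries into the conditional stability inequality. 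A secondary subtlety is ensuring the regularity $\gamma\nabla v \in H(\mathrm{div};\Omega)$ needed to make $\nabla\cdot(\gamma\nabla v)$ a well-defined element of $H^{-1}(\Omega)$ — this holds because $\sigma = \gamma\nabla u \in H(\mathrm{div})$ by the PDE and $\gamma\nabla\tilde u_h$ is piecewise smooth with, after subtracting $\tilde\sigma_h \in RT_{k-1} \subset H(\mathrm{div})$, the remainder handled by the duality pairing directly.
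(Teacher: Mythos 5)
Your proposal follows essentially the same route as the paper's proof: apply Theorem \ref{thm:cond_stab} to $e=u-\tilde u_h$, insert the bounds of Corollaries \ref{cor:error2} and \ref{cor:H1} for $\|e\|_\omega$ and $\|e\|_\Omega$, and estimate the residual $\|\nabla\cdot(\gamma\nabla e)\|_{H^{-1}(\Omega)}$ by splitting off the flux-consistency part $(\gamma\nabla e-(\sigma-\tilde\sigma_h),\nabla\phi)_\Omega$ (controlled by the $|||\cdot,\cdot,\cdot|||_\beta$-norm) and treating the remaining divergence term via the discrete constraint, the projection $\pi_0$ and the $\|h\nabla\cdot\varsigma\|_\Omega$ control, exactly as in the paper (your decomposition has a harmless sign slip in its last bracket, which should read $+(\sigma-\tilde\sigma_h)$). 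One point of justification to tighten: for the $H^1(B)$ bound, $\|\gamma^{-1}\nabla\cdot(\gamma\nabla e)\|_{H^{-1}(\Omega)}$ is not comparable to $\|\nabla\cdot(\gamma\nabla e)\|_{H^{-1}(\Omega)}$ merely because $\gamma$ is bounded above and below --- one needs $\gamma^{-1}$ to be a multiplier on $H^1_0(\Omega)$, i.e.\ the standing assumption $\gamma\in W^{1,\infty}(\Omega)$ with positive lower bound, which is how the paper argues by testing with $\gamma^{-1}v$ and using $\|\nabla(\gamma^{-1}v)\|_\Omega\leq C\|v\|_{H^1(\Omega)}$.
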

\begin{proof}
We only detail the first inequality. The proof of the second is similar using the second inequality of Theorem \ref{thm:cond_stab}.
Let $e = u-\tilde u_h \in H^1(\Omega)$. Then by Theorem \ref{thm:cond_stab} there holds
\[
\|e\|_B \leq C (\|e\|_\omega + \|\nabla \cdot (\gamma \nabla e)\|_{H^{-1}(\Omega)})^{\tau_0} (\|e\|_\Omega + \|\nabla \cdot (\gamma \nabla e)\|_{H^{-1}(\Omega)})^{(1-\tau_0)}.
\]
Recalling the bounds of Corollaries \ref{cor:error2} and \ref{cor:H1} we see that
\[
\|e\|_\omega \leq C  (h^k \Phi(u,\sigma) + \delta)
\]
and 
\[
\|e\|_{\Omega} \leq C (\Phi(u,\sigma) + h^{-k} \delta ).
\]
It remains to bound $\|\nabla \cdot (\gamma \nabla e)\|_{H^{-1}(\Omega)}$.  By definition
\[
\|\nabla \cdot (\gamma \nabla e)\|_{H^{-1}(\Omega)} = \sup_{\substack{v \in H^1_0(\Omega) \\ \|v\|_{H^1(\Omega)} = 1}} (\gamma \nabla e, \nabla v)_\Omega.
\]
Considering the term in the right hand side we see that, with $\varsigma = \sigma - \tilde \sigma_h$,
\[
(\gamma \nabla e, \nabla v)_\Omega = \underbrace{(\gamma \nabla e - \varsigma, \nabla v)_\Omega}_{I_1} + \underbrace{(\varsigma, \nabla v)_\Omega}_{I_2} = I_1+ I_2.
\]
For the first term we see that by the Cauchy-Schwarz inequality and by recalling the definition of $|||(\cdot,\cdot,\cdot)|||_\beta$ and the bound of Corollary \ref{cor:error2} there holds
\[
I_1 \leq \|\gamma\|_{L^\infty(\Omega)}^{\frac12}  \|\gamma^{\frac12} \nabla e - \gamma^{-\frac12}\varsigma\|_\Omega \leq C \|\gamma\|_{L^\infty(\Omega)}^{\frac12} (h^k \Phi(u,\sigma) + \delta).
\]
To bound the term $I_2$ we integrate by parts and use the equation to obtain
\[
I_2 = (\nabla \cdot \varsigma, v)_\Omega = -(f,v)_\Omega - (\nabla \cdot \tilde \sigma_h, v)_\Omega.
\]
Recalling that 
\[
(\tilde f + \nabla \cdot \tilde \sigma_h,x_h)_\Omega = 0 \mbox{ for all } x_h \in X_0
\]
we have, with $\pi_0$ denoting the $L^2$-projection onto $X_0$ and using the elementwise Poincar\'e inequality to obtain $\|v - \pi_0 v\|_T \leq C h_T \|\nabla v\|_T$, 
\[
I_2 = (\nabla \cdot \varsigma, v - \pi_0 v)_\Omega + (\delta f, v)_\Omega \leq C (\|h \nabla \cdot \varsigma\|_\Omega + \delta).
\]
Noting that $\|h \nabla \cdot \varsigma\|_\Omega \leq |||(0,\varsigma,0)||||_\beta$ we conclude that
\[
I_2 \leq C(|||(0,\varsigma,0)||||_\beta + \delta) \leq C (h^k \Phi(u,\sigma) + \delta).
\]
Collecting terms we see that
\[
\|e\|_B \leq C (h^k \Phi(u,\sigma) + \delta)^{\tau_0} (\Phi(u,\sigma) + h^{-k} \delta + h^k \Phi(u,\sigma) + \delta)^{(1-\tau_0)}
\]
Using that $(h^k \Phi(u,\sigma)+ \delta)^{\tau_0} = h^{k \tau_0}(\Phi(u,\sigma) + h^{-k} \delta)^{\tau_0}$ and since $h<1$,
\[
(\Phi(u,\sigma) + h^{-k} \delta)^{\tau_0} (\Phi(u,\sigma) + h^{-k} \delta + h^k \Phi(u,\sigma) + \delta)^{(1-\tau_0)} \leq 2 (\Phi(u,\sigma) + h^{-k} \delta)
\]
from which the claim follows.

For the second estimate the only difference is that we need to apply the second bound of Theorem \eqref{thm:cond_stab}, which in particular implies that we must bound
\[
\|\gamma^{-1}\nabla \cdot (\gamma \nabla e)\|_{H^{-1}(\Omega)} = \sup_{\substack{v \in H^1_0(\Omega) \\ \|v\|_{H^1(\Omega)} = 1}} (\gamma \nabla e, \nabla (\gamma^{-1} v))_\Omega.
\]
However since $\gamma \in W^{1,\infty}(\Omega)$ and strictly positive we have $$\|\nabla (\gamma^{-1} v)\|_{L^2(\Omega)} \leq C_{\gamma_{min}} \|\gamma\|_{W^{1,\infty}(\Omega)} \|v\|_{H^1(\Omega)}$$ and we can therefore proceed as before, but with $\gamma^{-1} v$ in the place of $v$.
\end{proof}
\begin{remark}
It follows from Theorem \ref{thm:error_cond} that refinement should be stopped when $h^k \approx \delta/\Phi(u,\sigma)$. This can be built in to the method by taking 
$\beta = \max(h, h_0)^{2k}$, where $h_0 = (\delta/\Phi(u,\sigma))^{\frac{1}{k}}$. Then the following bound holds for all $h>0$. 
\[
\|u - \tilde u_h\|_{B} \leq C \max(h,h_0)^{k \tau_0} \Phi(u,\sigma). 
\]
\end{remark}
\section{Applying the method for the approximation of $\gamma$}\label{sec:steep}
To use the above method for the reconstruction of $\gamma$ we simply minimize the functional $\mathcal{L}_\gamma$, \eqref{eq:lagrange} with respect to $\gamma$. Typically to enhance stability some additional stabilization of the $\gamma$ variable must be added. Below we will use Tikhonov regularizaton on the $H^1$-seminorm. The regularized Lagrangian takes the form
 \begin{equation}\begin{aligned}
  \mathcal{L}_\gamma(\sigma_h,u_h,z_h) &:= \frac12 \|\gamma^{-\frac12} (\gamma \nabla u_h - \sigma_h)\|_\Omega^2 + \frac12 \alpha \|u_h - q\|_\omega^2 - (\nabla \cdot \sigma,z_h)_\Omega - (f,z_h)_\Omega\\
   &+ \frac12 \|h^{\frac12} \nabla \gamma\|_\Omega^2 + \frac12 \beta \|\sigma_h \cdot n\|_{\partial \Omega}
 \label{eq:Lagreg}
 \end{aligned}
 \end{equation}
 and the optimization then reads
 \[
 \gamma = \mbox{argmin}_{\tilde \gamma \in \Gamma_l}  \mathcal{L}_{\tilde \gamma}(\sigma_h,u_h,z_h).
 \]
Here we introduced the space $\Gamma_l$ for the approximation of the coefficient $\gamma$ and below we will always assume that $\Gamma_l = V_l$ or $\Gamma_l = X_l$ with $l\leq k$.

The formulation \eqref{eq:compact} can then be used for the construction of the gradient for instance when using the well-known steepest descent algorithm. This leads to a robust, but slow reconstruction method.  We give the algorithm in Algorithm \ref{alg:steep} below, for one data point $q$. However in practice this appears not to be enough. Several sampled solutions must be available and preferably both data on $u$ in the bulk and on $\sigma \cdot n$ on the boundary to obtain a stable reconstruction. 
\begin{algorithm}[h!]
\begin{flushleft}
Assume   $\gamma^0 \in \Gamma_l$ , $l \leq k$, and associated $q$, to be given, fix $TOL>0$ and, for $n\geq 1$, perform:
\end{flushleft}
\begin{itemize}
\item Compute $u^{n}$, $\sigma^{n}$ solution to 
\begin{equation}\label{eq:compact_iter}
A_{\gamma^{n-1}}[(u^{n},\sigma^{n},z_h),(v_h,\tau_h,y_h)] = (q,v_h)_\omega + (f,y_h)_\Omega \quad \forall v_h,\, \tau_h ,\, y_h \in \mathcal{V}.
\end{equation}
\item Find $g^n \in \Gamma_l$ such that
\[
(g^n,v)_\Omega + (h \nabla g^n,\nabla v)_\Omega = (|\nabla u_h^n|^2 - \gamma^{-1} |\sigma_h^n|^2, v)_\Omega \mbox{ for all } v
\in \Gamma_l.
\]
\item Let $s^n>0$ be a steplength that can be fixed or determined using a line search.
\item Update the coefficient.
\[
(\gamma^{n},v)_\Omega = (\gamma^{n-1},v)_\Omega + (s^n g^n,v)_\Omega. 
\]
\item Repeat until $\|g^n\|_\Omega \leq TOL$.
\end{itemize}
\caption{Steepest descent algorithm for coefficient recovery}\label{alg:steep}
\end{algorithm}
We recall that it is thanks to the robustness of the unique continuation method discussed above that the problem \eqref{eq:compact_iter} is solvable for all $\gamma^n$.
\section{Numerical examples}
To illustrate the theory of the previous section we present two numerical experiments. In both cases we consider the unit disc, $\Omega = \{(x,y) \in \mathbb{R}^2: x^2+y^2<1\}$. The interior data is given in the zone $\omega = \{(x,y) \in \mathbb{R}^2: 0.75 < x^2+y^2<1; x < 0.5\}$. Only unperturbed data were considered. We chose $\beta = 10^{-3} h^{2k}$ in the computations below, but similar results were obtained for $\beta = 0$ so this regularization was not essential for the present computations.
\subsection{Unique continuation}
We consider the formulation \eqref{eq:compact} with $\alpha=1000.$ and the exact solution given by the solution to 
\[
\nabla \cdot \gamma \nabla u=0 \mbox{ in } \Omega
\]
with $u=\sin(3 \theta)$ on $\partial \Omega$. The reference solution was computed using conforming finite elements of order $4$ on a mesh with three times more elements on the boundary of the disc than the mesh used for the continuation. We considered three combinations of finite elements, $V_k \times RT_{k-1} \times X_{k-1}$, with $k \in \{1,2,3\}$. In Fig. \ref{fig:UC_conv} we report the convergence of the local $L^2$-errors, in $\Omega_-:= \{(x,y) \in \Omega:\, x \leq 0 \}$,
$\|u - u_h\|_{\Omega_-}$. The graph for $k=1$ is indicated with circle markers, $k=2$ with triangle markers and $k=3$ with square markers.
Three reference lines are given. The dotted line is $y = 0.01 x^{0.45}$, the dashed line is $y = 0.001 x^{0.9}$ and the filled line without markers is $y = 0.0001 x^{1.35}$. Observe that the lowest order approximation, $k=1$ converges slightly faster than $0.45$, the case $k=2$ has much smaller error than for $k=1$, but the observed rate is similar. Finally in the case $k=3$ the convergence order is approximately $1.35$. We conclude that the power in the conditional stability estimate appears to be approximately $\tau_0 = 0.45$ for this case. We note that it is possible that the curved boundary affects the convergence in the higher order cases.

\begin{figure}[!http]
\begin{center}
\includegraphics[scale=0.4]{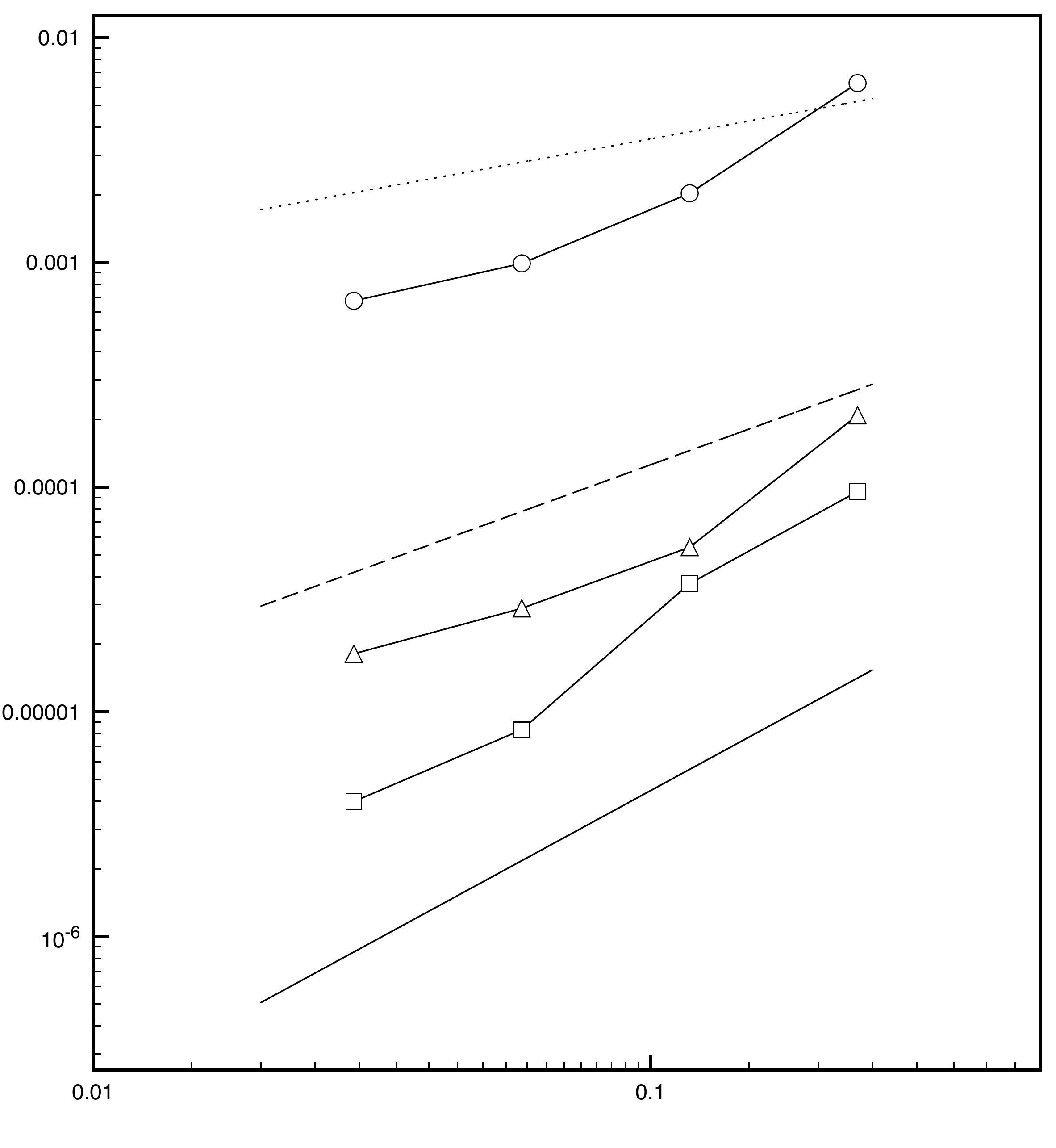}
\caption{Local $L^2$-norm error $\|u - u_h\|_{\Omega_-}$, plotted againsy $h$. The dotted line is  $y = 0.01 x^{0.45}$ the dashed line is $y = 0.001 x^{0.9}$ and the filled line without markers are is $y = 0.0001 x^{1.35}$.}\label{fig:UC_conv}
\end{center}
\end{figure}
\subsection{Reconstruction of the diffusivity coefficient}
In this example we will use partial data and reconstruct the diffusivity coefficient $\gamma = 1 + \exp(-(x+0.3)^{2}-(y-0.3)^{2})$, starting from the initial guess $\gamma^0=1$. In addition to data in $\omega$ we add the Neumann data on the part of the boundary with the polar coordinates $r=1$ and $0\leq \theta \leq 3 \pi/2$ by imposing it strongly on the $\sigma_h$ variable. Observe that the sets of boundary and interior data do not match on the boundary. We consider two data sets corresponding to the solution given by 
\[
\nabla \cdot \gamma \nabla u=0 \mbox{ in } \Omega
\]
with $u= \cos(k_1 \theta) + \sin(k_2 \theta)$ on $\partial \Omega$ and $(k_1,k_2) \in \{(2,1), \,(3,2) \}$. We compute an approximate solution $\gamma_h \in V_1$ on six consecutive meshes, starting with $h \approx 0.168$ and then dividing the mesh size by two in every refinement. The Tikhonov type regularization $\frac12 \|h^{\frac12} \nabla \gamma_h\|^2_\Omega$ (on exactly this form, i.e. with parameter $1$ in the Euler-Lagrange equations) was added to the functional. Note that in the steepest descent algorithm above the regularizing terms is treated implicitly. On each mesh level the approximation was computed using $300$ steepest descent iterations as defined in the section \ref{sec:steep} and fixed step length $s=0.8$. The spaces used for $u_h,\, \sigma_h,\, z_h$ were chosen as $V_2$, $RT_1$, $X_1$.  The convergence 
behavior is reported in Fig. \ref{fig:cald_conv} where the $L^2$-norm error, $\|\gamma - \gamma_h\|_\Omega$ is plotted against $|\log(h)|^{-1}$. The dotted lines are reference curves on the form $c |\log(h)|^{-0.5}$, with $c=0.18$ (top) and $c=0.15$ (bottom). The experimental order of convergence turns out to be very close to $\|\gamma - \gamma_h\|_\Omega = 0.1625 |\log(h)|^{-0.5}$. Attempts to perform the reconstruction with $k=1$ failed due to convergence to an unphysical minimum on the coarse mesh. The use of $k=3$ performed similarly as $k=2$, without any improved accuracy.
\begin{figure}[!http]
\begin{center}
\includegraphics[scale=0.4]{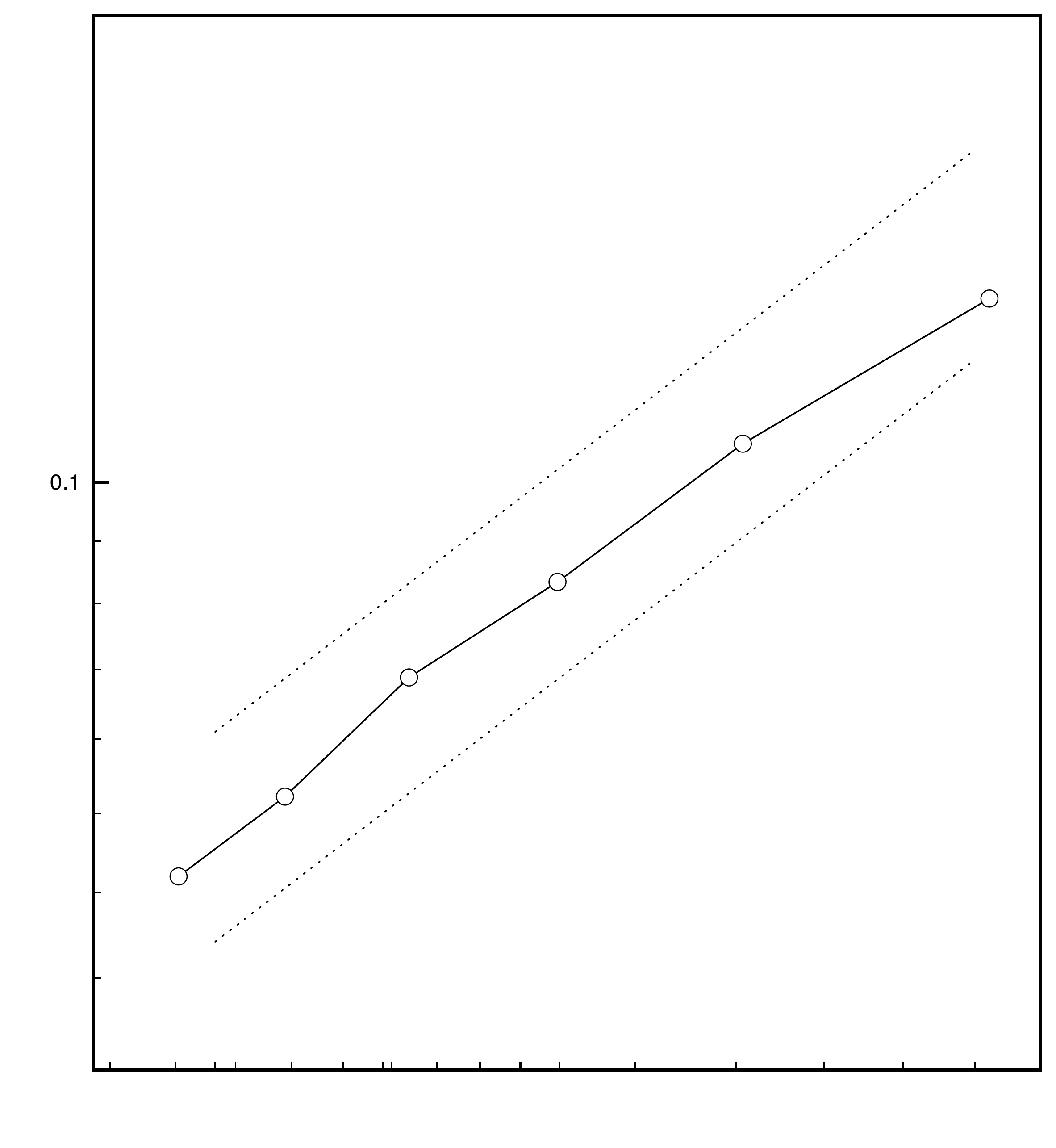}
\caption{$L^2$-norm error, $\|\gamma - \gamma_h\|_\Omega$ is plotted against $|\log(h)|^{-1}$. The dotted lines are reference curves on the form $c |\log(h)|^{-0.5}$. The coarsest mesh has $h=0.168$ and the finest mesh $h=0.053$.}\label{fig:cald_conv}
\end{center}
\end{figure}
\section{Conclusion}
In this paper we extended the primal dual method introduced in \cite{BLO18}, for the approximation of the elliptic Cauchy problem, to the case of unique continuation from interior data. We showed that the method is a discrete realization of the Kohn-Vogelius method and that error estimates supported by conditional stability estimates can be achieved if the boundary condition of one of the sub problems is perturbed. The results were illustrated numerically for unique continuation and for the reconstruction of the diffusion coefficient using partial data. In both cases the convergence rates appear to match those predicted by theoretical stability estimates. The local error for the unique continuation problem had H\"older type convergence, $O(h^\alpha)$,$ \alpha \in (0,1)$. The convergence of the error in the diffusion coefficient when reconstructed with partial data had  logarithmic convergence, $O(|\log(h)|^\alpha)$, $\alpha \in (-1,0)$. 

\section*{Acknowledment} The author was funded by the EPSRC grants EP/T033126/1 and EP/V050400/1. The author also thanks the $PeC^3$ networks and the organising committee of the {\emph{Peruvian conference on Scientific Computing}} for the invitation to attend the event and their hospitality. 
\bibliographystyle{abbrv}
\bibliography{ref}

\end{document}